%
%

\documentclass[12pt,a4paper,oneside]{amsart}
\usepackage[a4paper]{geometry}
\geometry{hmargin=1.8cm,top=2cm,bottom=2.5cm}

\usepackage{fouriernc}
\DeclareMathAlphabet{\mathcal}{OMS}{cmsy}{m}{n} 

\usepackage{amssymb,enumerate,mathrsfs,perpage}

\newtheorem{theorem}{Theorem}
\newtheorem{proposition}{Proposition}
\newtheorem{lemma}{Lemma}

\newcommand{\vertbar}{\>|\>}
\newcommand{\set}[2]{\ensuremath{\{\> #1 \vertbar #2 \>\}}}

\MakePerPage[2]{footnote}

\AtBeginDocument
{
  \setlength{\leftmargini}{1.25em}
  \setlength{\jot}{0.1pt}  
}

\DeclareMathOperator{\ad}{ad}
\DeclareMathOperator{\Aut}{Aut}
\DeclareMathOperator{\Diag}{Diag}
\DeclareMathOperator{\Exp}{Exp}
\DeclareMathOperator{\GF}{\mathsf{GF}}

\DeclareMathOperator{\id}{id}
\DeclareMathOperator{\ess}{\mathsf{S}}

\hyphenation{al-geb-ras se-mi-sim-ple}

\begin{document}

\title{On simple $15$-dimensional Lie algebras in characteristic $2$}

\author{Alexander Grishkov}
\address[Alexander Grishkov]{University of S\~ao Paulo, Brazil and 
Omsk F.M. Dostoevsky State University, Russia}
\email{shuragri@gmail.com}

\author{Henrique Guzzo Jr.}
\address[Henrique Guzzo Jr.]{University of S\~ao Paulo, Brazil}
\email{guzzo@ime.usp.br}

\author{Marina Rasskazova}
\address[Marina Rasskazova]{Omsk State Technical University, Russia}
\email{marinarasskazova@yandex.ru}

\author{Pasha Zusmanovich}
\address[Pasha Zusmanovich]{University of Ostrava, Czech Republic}
\email{pasha.zusmanovich@gmail.com}

\date{March 26, 2021}

\begin{abstract}
Motivated by the recent progress towards classification of simple 
finite-dimensional Lie algebras over an algebraically closed field of 
characteristic $2$, we investigate such $15$-dimensional algebras.
\end{abstract}

\maketitle

\section*{Introduction}

The classification of finite-dimensional simple Lie algebras over an 
algebraically closed field has a long and interesting history. The 
characteristic zero case, due to Killing, Cartan, and Dynkin, is nowadays a 
classic. The case of characteristic $p>3$ was accomplished relatively 
recently, due to the efforts of dozens of people, spread over more than 50 years
and hundreds of papers, and culminated in the three volumes of Strade, 
\cite{strade}. The cases of characteristics $2$ and $3$ remain open, although a
lot of efforts were done recently to augment the classification program by 
small characteristics specifics, in particular, to put Lie superalgebras into 
play (see, for example, a monumental treatise by Leites and his collaborators, 
\cite{leites-and-co}, and references therein).

In \cite{GZ}, we did another small step towards the classification of 
finite-dimensional simple Lie algebras over an algebraically closed field of 
characteristic $2$: it was proved that any such algebra of absolute toral tank 
$2$, and having a Cartan subalgebra of toral rank one, is $3$-dimensional. In 
this, we based on the paper of Skryabin, \cite{skryabin}, where it was proved, 
among other things, that in characteristic $2$ there are no simple Lie algebras
of absolute toral rank $1$, and simple Lie algebras having a Cartan subalgebra 
of toral rank $1$ where characterized as certain filtered deformations of semisimple Lie 
algebras with the socle of the form $S \otimes \mathcal O$, where $S$ is a 
simple Lie algebra either of Zassenhaus or Hamiltonian type, and $\mathcal O$ is
a reduced polynomial algebra.

In the process of the proof of the main result of \cite{GZ}, we have constructed
a $2$-parameter family $\mathscr L(\beta,\delta)$ of $15$-dimensional simple 
Lie algebras (\cite[\S 6]{GZ}). The algebra $\mathscr L(0,0)$ in this family 
coincides with the smallest algebra in the series constructed by Skryabin 
(\cite[Example at pp. 691--692]{skryabin}).

It is the purpose of this paper to study the family $\mathscr L(\beta,\delta)$.
Among other things, we prove that all algebras within the family are isomorphic
to the same algebra $\mathscr L$ (\S \ref{sec-2param}), and we determine the 
absolute toral rank (\S \ref{sec-subalg}), and the automorphism group of 
$\mathscr L$ (\S \ref{sec-aut}). In passing, we introduce the notion of a thin 
decomposition of a simple Lie algebra with respect to a torus (\S \ref{ss-thin})
which, we suggest, should play a role in classification efforts (see open 
questions in \S \ref{s-quest}).

Throughout the paper, the ground field $K$ is assumed to be perfect of 
characteristic $2$, unless stated otherwise. Although the initial problem 
assumes the ground field is algebraically closed, for our purposes it will be 
enough to assume that square roots exist in $K$. This allows to include in our
consideration the case $K = \GF(2)$, what can be useful in some circumstances
(cf. \S \ref{sec-subalg} and \S \ref{ss-eick}).

Our terminology and notation is mostly standard. $C_L(X)$ denotes the 
centralizer of a set $X$ in a Lie algebra $L$; the linear span of a set $X$ is 
denoted either as $KX$ or as $\langle X \rangle$; the $2$-envelope of a Lie 
algebra $L$ is denoted by $L_2$; $\id_L$ denotes the identity map on $L$. Other
notation is explained as soon as it is introduced in the text.

\section{The $2$-parameter family $\mathscr L(\beta,\delta)$}\label{sec-2param}

Recall the definition of the family $\mathscr L(\beta,\delta)$, depending of two
parameters $\beta, \delta \in K$, of $15$-di\-men\-si\-o\-nal simple Lie 
algebras constructed in \cite[\S 6]{GZ}. These algebras are filtered 
deformations of the semisimple Lie algebra of the form
\begin{equation}\label{eq-ss}
\ess \otimes \mathcal O_1(2) + g \otimes \langle 1,x \rangle + \partial , 
\end{equation}
where $\ess$ is the $3$-dimensional simple Lie algebra with the basis 
$\{e,f,h\}$ and multiplication table
\begin{equation}\label{eq-mult}
[e,h] = e, \quad [f,h] = f, \quad [e,f] = h ,
\end{equation}
$g = (\ad f)^2$ is an outer derivation of $\ess$, $\mathcal O_1(2)$ is the 
$4$-dimensional divided power algebra with the basis $\{1,x,x^{(2)},x^{(3)}\}$,
and $\partial: x^{(i)} \to x^{(i-1)}$ is the special derivation of 
$\mathcal O_1(2)$.

For our purpose, it will be convenient to relabel the basis elements of the
algebras $\mathscr L(\beta,\delta)$ as follows:
\begin{alignat*}{7}
b_1 &= e \otimes 1 \qquad  &c_1 &= e \otimes x^{(3)}  \\
b_2 &= f \otimes 1 \qquad  &c_2 &= f \otimes x^{(3)}  \\
b_3 &= h \otimes 1 \qquad  &c_3 &= h \otimes x^{(3)}  \\
b_4 &= e \otimes x \qquad  &c_4 &= g \otimes 1         \\
b_5 &= f \otimes x \qquad  &c_5 &= g \otimes x         \\
b_6 &= h \otimes x \qquad  &d &= \partial              \\
b_7 &= e \otimes x^{(2)}   \\
b_8 &= f \otimes x^{(2)}   \\
b_9 &= h \otimes x^{(2)} 
\end{alignat*}

In terms of this basis, the multiplication table of $\mathscr L(\beta,\delta)$
(see \cite[(5.4)]{GZ}) reads:

\begin{equation}\label{eq-m}
\begin{array}{|c||c|c|c|c|c|c|c|c|c|c|c|c|c|c|} \hline
&b_2&b_3&b_4&b_5&b_6&b_7&b_8&b_9&c_1&c_2&c_3&c_4&c_5&d \\ \hline\hline
b_1&b_3&b_1&\beta c_5&b_6&b_4&\delta c_4&b_9&b_7&\delta c_5 + d&c_3&c_1&b_2&b_5&\beta c_2 
\\ \hline
b_2&&b_2&b_6&0&b_5&b_9&0&b_8&c_3&0&c_2&0&0&0            \\ \hline
b_3&&&b_4&b_5&0&b_7&b_8&0&c_1&c_2&0&0&0&0               \\ \hline
b_4&&&&0&0&\delta c_5 + d&c_3&c_1&b_3&c_4&b_2&b_5&0&b_1 \\ \hline
b_5&&&&&0&c_3&0&c_2&c_4&0&0&0&0&b_2                     \\ \hline
b_6&&&&&&c_1&c_2&0&0&0&c_4&0&0&b_3                      \\ \hline
b_7&&&&&&&0&0&b_6&c_5&b_5&b_8&c_2&b_4                   \\ \hline
b_8&&&&&&&&0&c_5&0&0&0&0&b_5                            \\ \hline
b_9&&&&&&&&&0&0&c_5&0&0&b_6                             \\ \hline
c_1&&&&&&&&&&0&b_8&c_2&0&b_7                            \\ \hline
c_2&&&&&&&&&&&0&0&0&b_8                                 \\ \hline
c_3&&&&&&&&&&&&0&0&b_9                                  \\ \hline
c_4&&&&&&&&&&&&&0&0                                     \\ \hline
c_5&&&&&&&&&&&&&&c_4                                    \\ \hline
\end{array}
\end{equation}

\medskip

\begin{lemma}
For any $\beta,\delta \in K$, we have 
$\mathscr L(\beta,\delta) \simeq \mathscr L(\beta,0)$.
\end{lemma}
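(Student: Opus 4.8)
The plan is to produce an explicit linear change of basis transforming $\mathscr L(\beta,\delta)$ into $\mathscr L(\beta,0)$. Since the only place the parameter $\delta$ enters the multiplication table \eqref{eq-m} is in the three brackets $[b_1,c_1] = \delta c_5 + d$, $[b_1,b_7] = \delta c_4$, and $[b_4,b_7] = \delta c_5 + d$, the transformation should be the identity on most basis vectors and adjust only a few of them so as to absorb the $\delta$-terms. A natural first guess is to leave $d$ alone and modify the elements on the ``$\mathcal O$-top'' layer, i.e. replace $c_1, c_2, c_3$ (the copies of $\ess$ tensored with $x^{(3)}$) and/or $c_4, c_5$ by $c_i + (\text{linear combination of lower terms involving }\delta)$, then recompute the whole table and choose the coefficients to kill $\delta$.

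The key steps, in order: (i) identify the minimal set of basis vectors that must be altered — I expect something like $d' = d + \delta\, c_5$ together with corrective changes to $c_1$, possibly $b_7$ or $c_4$, so that $[b_1, c_1'] = d'$ and $[b_4, b_7'] = d'$ come out $\delta$-free; (ii) check that the same substitution does not reintroduce $\delta$ elsewhere — in particular in the many brackets with $d$ (column and row $d$ of \eqref{eq-m}), since replacing $d$ by $d + \delta c_5$ changes $[x, d]$ for every $x$ by $\delta [x, c_5]$, and these corrections must either vanish or be reabsorbable; (iii) verify that the resulting structure constants are literally those of $\mathscr L(\beta,0)$, using the table \eqref{eq-m} with $\delta = 0$; (iv) confirm the map is invertible (it will be unipotent or block-triangular with respect to the grading by powers of $x$, hence automatically invertible). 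Working in characteristic $2$ simplifies sign bookkeeping but one must be careful that $\delta^2$-terms, if they appear from composing two corrections, are handled — ideally the correction is ``linear in $\delta$'' on a nilpotent piece so that $\delta^2$ never shows up.

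The main obstacle I anticipate is step (ii): the element $d$ acts on a large portion of the algebra (its row/column in \eqref{eq-m} is almost full), so perturbing $d$ by $\delta c_5$, or perturbing the $c_i$'s which themselves appear in many brackets, risks a cascade of new $\delta$-terms scattered across the table. The real content of the lemma is the (perhaps slightly surprising) fact that these cascading corrections all cancel or collapse back into the existing $\mathscr L(\beta,0)$ relations — this is presumably a reflection of the cohomological fact that the $\delta$-deformation is trivial, i.e. the relevant $2$-cocycle is a coboundary. So a cleaner alternative to brute force would be: exhibit the $\delta$-term as $\delta$ times an explicit $2$-cocycle $\varphi$ on $\mathscr L(\beta,0)$, observe $\varphi = d\psi$ for an explicit $1$-cochain $\psi$ (a derivation-like linear map built from multiplication by $x^{(3)}$ composed with something), and conclude that $\exp(\delta\psi)$ — which in characteristic $2$ must be interpreted as $\id + \delta\psi$ acting on a piece where $\psi^2 = 0$ — is the desired isomorphism. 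Either way, the verification reduces to a finite, if tedious, check against the table; I would carry it out by the explicit-basis-change route and simply display the final substitution and the handful of brackets that need checking.
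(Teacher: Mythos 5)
Your overall strategy---an explicit linear change of basis absorbing the three $\delta$-entries $[b_1,b_7]=\delta c_4$ and $[b_1,c_1]=[b_4,b_7]=\delta c_5+d$ of the table (\ref{eq-m})---is exactly the route the paper takes, and you locate the occurrences of $\delta$ correctly. But what you submit is a plan, not a proof: the substitution is never exhibited, the verification is deferred wholesale, and the shape you predict for the substitution (corrections \emph{linear} in $\delta$, touching mainly $d$ and the $c_i$'s, e.g.\ $d'=d+\delta c_5$) points in a direction that does not work. The isomorphism used in the paper is
\begin{gather*}
b_1' = b_1+\sqrt{\delta}\, b_6+\delta b_8,\qquad b_3'=b_3+\sqrt{\delta}\, b_5,\qquad b_4'=b_4+\delta c_2,\qquad b_7'=b_7+\sqrt{\delta}\, c_3,\\
b_9'=b_9+\sqrt{\delta}\, c_2,\qquad c_1'=c_1+\sqrt{\delta}\, c_4,\qquad d'=d+\sqrt{\delta}\, b_2,
\end{gather*}
all other basis vectors unchanged---so the corrections live mostly on the $b_i$'s, and $d$ is shifted by $\sqrt{\delta}\,b_2$, not by $\delta c_5$.

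The coefficients $\sqrt{\delta}$ are the essential point you miss, and they are precisely where the paper's standing hypothesis that $K$ is perfect (square roots exist) is used. In characteristic $2$ a substitution $\mathrm{id}+tM$ perturbs a bracket by $t([Mx,y]+[x,My])+t^2[Mx,My]$; with $t=\sqrt{\delta}$ the quadratic terms contribute at order $\delta$, and the cancellation of the $\delta$-entries of (\ref{eq-m}) relies on this, not on a coboundary linear in the deformation parameter. This undercuts your proposed shortcut of writing the $\delta$-part as $\delta\varphi$ with $\varphi$ a coboundary of some explicit $\psi$ and taking $\mathrm{id}+\delta\psi$: nothing in your argument produces such a $\psi$, your insistence that ``$\delta^2$ never shows up'' rules out the mechanism that actually does the job, and the fact that the paper needs $\sqrt{\delta}$ (and hence perfectness of $K$) indicates that a trivialization rational in $\delta$ is at least not available by the obvious ansatz. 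So the gap is twofold: the change of basis, which is the entire content of the lemma, is absent, and the structural guess about its form would have led you astray had you carried out the computation.
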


\begin{proof}
Let us consider a new basis 
$$
\{
b_1^\prime, b_2, b_3^\prime, b_4^\prime, b_5, b_6, b_7^\prime, b_8, b_9^\prime,
c_1^\prime, c_2, c_3, c_4, c_5, d^\prime
\}
$$
of the algebra $\mathscr L(\beta,\delta)$, where
\begin{align*}
b_1^\prime &= b_1 + \sqrt{\delta} b_6 + \delta b_8 \\
b_3^\prime &= b_3 + \sqrt{\delta} b_5              \\
b_4^\prime &= b_4 + \delta c_2                     \\
b_7^\prime &= b_7 + \sqrt{\delta} c_3              \\
b_9^\prime &= b_9 + \sqrt{\delta} c_2              \\
c_1^\prime &= c_1 + \sqrt{\delta} c_4              \\
d^\prime   &= d + \sqrt{\delta} b_2 .
\end{align*}

It is straightforward to check that in this basis the multiplication table 
coincides with the multiplication table (\ref{eq-m}) with $\delta = 0$. 
\end{proof}

\begin{lemma}
For any $\beta \in K$, we have $\mathscr L(\beta,0) \simeq \mathscr L(0,0)$.
\end{lemma}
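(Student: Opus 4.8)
The plan is to prove the isomorphism in the same style as the preceding Lemma: exhibit an explicit change of basis of $\mathscr L(\beta,0)$ after which the structure constants become those of (\ref{eq-m}) with $\beta=\delta=0$. Since $K$ is perfect of characteristic $2$, $\sqrt{\beta}$ is available, and — as in Lemma 1, where $\sqrt\delta$ and $\delta$ occurred — the new basis vectors will be produced from the old ones by adding corrections whose coefficients are polynomials in $\sqrt\beta$. The key observation making this feasible is that, for $\delta=0$, the only entries of (\ref{eq-m}) that involve $\beta$ at all are $[b_1,b_4]=\beta c_5$ and $[b_1,d]=\beta c_2$; hence the entire task is to absorb exactly these two terms while leaving the remaining products of the table untouched.

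To locate the correct substitution I would be guided by the grading. Assigning the degrees $1,0,-1$ to $e,h,f$ makes $\ess$ a $\mathbb Z$-graded Lie algebra even in characteristic $2$, and, combined with the degrees $\deg x^{(i)}=i$ and $\deg\partial=-1$ on $\mathcal O_1(2)$, it turns the semisimple algebra (\ref{eq-ss}) into a (bi)graded Lie algebra of which $\mathscr L(\beta,0)$ is a filtered deformation; with respect to $\deg_{\ess}$ the two $\beta$-terms are strictly degree-lowering. This restricts, for each candidate correction vector, the power of $\sqrt\beta$ with which it may occur, so the search is finite; and the relation $[b_4,d]=b_1$ suggests that $b_1$, $b_4$, $d$ (together with a handful of further vectors needed to repair the side effects) are the ones to modify. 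It is also worth recording a preliminary reduction valid over a perfect field: the rescaling $x\mapsto t^{\,\deg_{\mathcal O}x-2\deg_{\ess}x}\,x$ is an isomorphism of $\mathscr L(\beta,0)$ onto $\mathscr L(t^{8}\beta,0)$, and since $t\mapsto t^{8}$ is a bijection of $K^{*}$, all nonzero values of $\beta$ give isomorphic algebras; this leaves only the comparison of $\mathscr L(1,0)$ with $\mathscr L(0,0)$, which, however, must still be carried out by a direct substitution (over $\GF(2)$ already).

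The part I expect to be the main obstacle is precisely pinning down this substitution and checking it. In contrast with Lemma 1, the term $\beta c_5$ cannot be cancelled by a change of basis touching only $b_1$, $b_4$, $d$: a short computation shows that the obstruction one is left with lies in $\langle c_4\rangle$ rather than in $\langle c_5\rangle$. Consequently the substitution must act on several further basis vectors simultaneously, and the genuine work is the bookkeeping — verifying that the corrections of orders $\sqrt\beta,\ \beta,\ \beta\sqrt\beta,\dots$ remain mutually consistent over all the products of the multiplication table. The grading above is what keeps this bookkeeping finite and organized; and once the substitution has been written out, the verification that (\ref{eq-m}) with $\delta=0$ is transformed into (\ref{eq-m}) with $\beta=\delta=0$ is, just as in Lemma 1, a routine check.
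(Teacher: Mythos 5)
Your write-up is a search plan, not a proof: the lemma stands or falls with exhibiting an explicit change of basis that absorbs the two terms $[b_1,b_4]=\beta c_5$ and $[b_1,d]=\beta c_2$, and you never produce one. You correctly observe that these are the only $\beta$-dependent entries of the table when $\delta=0$, and your preliminary reduction is sound: the rescaling you describe has exactly the weights of the diagonal automorphisms $\Delta$ of \S\ref{ss-diag}, it carries $\mathscr L(\beta,0)$ onto $\mathscr L(t^{8}\beta,0)$, and $t\mapsto t^{8}$ is bijective on $K^{*}$ because $K$ is perfect, so only the comparison of $\mathscr L(1,0)$ with $\mathscr L(0,0)$ remains. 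But that comparison is precisely the content of the lemma, and you explicitly leave it open (``must still be carried out by a direct substitution'', ``the genuine work is the bookkeeping''). Until the substitution is written down and verified against the whole multiplication table, nothing has been proved; naming the obstruction space and promising a finite graded search does not substitute for the computation.

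For the record, the paper's proof consists exactly of the missing datum, and it is simpler than your plan anticipates: set $b_1^\prime=b_1+\beta^2 b_9$, $b_3^\prime=b_3+\beta^2 b_8$, $b_4^\prime=b_4+\beta^2 c_3$, $b_6^\prime=b_6+\beta^2 c_2$, $c_1^\prime=c_1+\beta^2 c_5$, $d^\prime=d+\beta^2 b_5$, keeping the remaining standard basis vectors, and check that in this basis the table becomes (\ref{eq-m}) with $\beta=\delta=0$. The corrections are uniform in $\beta$ with coefficient $\beta^{2}$, so no square roots are needed (they occur only in the preceding $\delta$-lemma), the rescaling reduction is superfluous, and the verification is a single finite check valid for all $\beta$ at once. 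Your guess that $b_1,b_4,d$ plus a few repair vectors must be modified points in the right direction (the paper modifies $b_1,b_3,b_4,b_6,c_1,d$), but the anticipated corrections of orders $\sqrt\beta,\ \beta,\ \beta\sqrt\beta,\dots$ do not match the actual isomorphism, and the decisive step is absent from your argument.
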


\begin{proof}
Consider a new basis
$$
\{
b_1^\prime, b_2, b_3^\prime, b_4^\prime, b_5, b_6^\prime, b_7, b_8, b_9, 
c_1^\prime, c_2, c_3, c_4, c_5, d^\prime
\}
$$
of the algebra $\mathscr L(\beta,0)$, where
\begin{align*}
b_1^\prime &= b_1 + \beta^2 b_9  \\
b_3^\prime &= b_3 + \beta^2 b_8  \\
b_4^\prime &= b_4 + \beta^2 c_3  \\
b_6^\prime &= b_6 + \beta^2 c_2  \\
c_1^\prime &= b_1 + \beta^2 c_5  \\
d^\prime   &= d + \beta^2 b_5 .
\end{align*}

It is straightforward to check that in this basis the multiplication table 
coincides with the multiplication table (\ref{eq-m}) with $\beta = \delta = 0$.
\end{proof}

An immediate corollary of these two lemmas is

\begin{theorem}\label{t1}
For any $\beta,\delta \in K$, we have 
$\mathscr L(\beta,\delta) \simeq \mathscr L(0,0)$.
\end{theorem}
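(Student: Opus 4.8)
The plan is to obtain Theorem~\ref{t1} as the composition of the two preceding lemmas. Concretely, for arbitrary $\beta,\delta \in K$ (recall that square roots exist in $K$, so all the expressions below make sense) we first invoke Lemma~1 to get an isomorphism $\varphi \colon \mathscr L(\beta,\delta) \xrightarrow{\ \sim\ } \mathscr L(\beta,0)$, and then Lemma~2 to get an isomorphism $\psi \colon \mathscr L(\beta,0) \xrightarrow{\ \sim\ } \mathscr L(0,0)$. The composite $\psi \circ \varphi$ is the desired isomorphism $\mathscr L(\beta,\delta) \simeq \mathscr L(0,0)$. Since $\beta$ and $\delta$ were arbitrary, this proves the theorem. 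There is essentially nothing left to do beyond citing the two lemmas, so I would keep the proof to a single sentence.

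If one wanted a self-contained argument not routed through the lemma statements, the approach would instead be to write down a single change of basis realizing $\mathscr L(\beta,\delta) \simeq \mathscr L(0,0)$ directly, by composing the two explicit basis substitutions: apply the substitution from Lemma~1 to kill $\delta$, then feed the resulting (primed) basis into the substitution from Lemma~2 to kill $\beta$. The only subtlety here is bookkeeping — in the second step the coefficients $\beta^2$ multiply the \emph{already-transformed} basis vectors, so one must expand, e.g., $b_1'' = b_1' + \beta^2 b_9' = b_1 + \sqrt{\delta}\,b_6 + \delta b_8 + \beta^2 b_9$, and similarly for the others, and then verify the multiplication table~(\ref{eq-m}) with $\beta = \delta = 0$ holds. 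But this is strictly more work than necessary and duplicates the verifications already performed in the two lemmas.

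The main (and only) potential obstacle is purely organizational: one must make sure that the hypotheses of the two lemmas chain correctly, i.e.\ that the target $\mathscr L(\beta,0)$ of Lemma~1 is literally the source of Lemma~2 for the \emph{same} value of $\beta$. Inspecting the statements, Lemma~1 gives $\mathscr L(\beta,\delta) \simeq \mathscr L(\beta,0)$ with $\beta$ untouched, and Lemma~2 gives $\mathscr L(\beta,0) \simeq \mathscr L(0,0)$, so the chain closes with no mismatch. Hence the theorem is an immediate corollary, exactly as the excerpt already indicates, and I would present it as such.

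\begin{proof}
Combining Lemma~1 and Lemma~2, we have, for any $\beta,\delta \in K$,
\[
\mathscr L(\beta,\delta) \simeq \mathscr L(\beta,0) \simeq \mathscr L(0,0). \qedhere
\]
\end{proof}
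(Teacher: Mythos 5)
Your proposal is correct and matches the paper exactly: the paper states Theorem~\ref{t1} as "an immediate corollary of these two lemmas," i.e.\ the composition $\mathscr L(\beta,\delta) \simeq \mathscr L(\beta,0) \simeq \mathscr L(0,0)$, which is precisely your argument.
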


Since the algebra $\mathscr L(0,0)$ is isomorphic to the smallest 
$15$-dimensional algebra in the family of simple Lie algebras constructed by 
Skryabin in \cite[pp.691--692]{skryabin}, we will refer to it in the rest of the
paper as the \emph{Skryabin algebra}. The rest of the paper is devoted to
elucidation of some properties, and computation of some invariants of the 
Skryabin algebra.

\section{$2$-envelope, derivations, sandwich subalgebra}\label{sec-misc}

We continue to employ the basis $\{b_1,\dots,b_9,c_1,\dots,c_5,d\}$ of the 
Skryabin algebra $\mathscr L$ from the previous section (referred as the
\emph{standard basis} in what follows). The multiplication table is given by 
(\ref{eq-m}), where $\beta = \delta = 0$.

The Skryabin algebra is not a $2-$algebra; its $2$-envelope $\mathscr L_2$ has 
dimension $19$, with additional basis elements 
$b_1^{[2]}, b_4^{[2]}, b_7^{[2]}, c_3^{[2]}$, the $2$-map:
\begin{gather*}
b_2^{[2]} = c_4 ,\quad
b_3^{[2]} = b_3 ,\quad
c_1^{[2]} = b_9 ,\quad
d^{[2]} = b_4^{[2]} ,
\\
b_5^{[2]} = b_6^{[2]} = b_8^{[2]} = b_9^{[2]} = c_2^{[2]} = c_4^{[2]} = 
c_5^{[2]} = 0 ,
\\
b_7^{[4]} = b_7^{[2]} ,\quad b_1^{[4]} = b_4^{[4]} = c_3^{[4]} = 0 ,
\end{gather*}
and the multiplication: 
\begin{equation*}
\begin{array}[t]{|c||c|c|c|c|c|c|}\hline
    & b_1^{[2]} & b_4^{[2]} & b_7^{[2]} & c_3^{[2]} \\ \hline\hline
b_1 & 0         & 0         & 0         & b_8  \\ \hline
b_2 & b_1       & 0         & 0         & 0    \\ \hline
b_3 & 0         & 0         & 0         & 0    \\ \hline
b_4 & 0         & 0         & b_4       & c_2  \\ \hline
b_5 & b_4       & 0         & b_5       & 0    \\ \hline
b_6 & 0         & 0         & b_6       & 0    \\ \hline
b_7 & 0         & b_1       & 0         & 0    \\ \hline
b_8 & b_7       & b_2       & 0         & 0    \\ \hline
b_9 & 0         & b_3       & 0         & 0    \\ \hline
c_1 & 0         & b_4       & c_1       & 0    \\ \hline
c_2 & c_1       & b_5       & c_2       & 0    \\ \hline
c_3 & d         & b_6       & c_3       & 0    \\ \hline
c_4 & b_3       & 0         & 0         & 0    \\ \hline
c_5 & b_6       & 0         & c_5       & 0    \\ \hline
d   & 0         & 0         & d         & c_5  \\ \hline
\end{array}
\qquad
\begin{array}[t]{|c||c|c|c|} \hline
          & b_4^{[2]} & b_7^{[2]} & c_3^{[2]} \\ \hline\hline
b_1^{[2]} & 0         & 0         & b_9       \\ \hline
b_4^{[2]} &           & 0         & c_4       \\ \hline
b_7^{[2]} &           &           & 0         \\ \hline
\end{array}
\end{equation*}

\bigskip

In what follows, we will employ the following standard notation for arbitrary
elements of $\mathscr L$:
\begin{align}
&\lambda_1 b_1 + \dots + \lambda_9 b_9 + \mu_1 c_1 + \dots + \mu_5 c_5 + \eta d
,
\notag
\intertext{and of $\mathscr L_2$:}
&\lambda_1 b_1 + \dots + \lambda_9 b_9 + \mu_1 c_1 + \dots + \mu_5 c_5 + \eta d
+ \xi_1 b_1^{[2]} + \xi_4 b_4^{[2]} + \xi_7 b_7^{[2]} + \xi_3 c_3^{[2]} ,
\label{eq-genel}
\end{align}
where 
$\lambda_1,\dots,\lambda_9,\mu_1,\dots,\mu_5,\eta,\xi_1,\xi_4,\xi_7,\xi_3 
\in K$.

\begin{proposition}\label{prop-der}
The derivation algebra of the Skryabin algebra coincides with its $2$-envelope.
\end{proposition}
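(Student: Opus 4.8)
The plan is to show two inclusions. Since $\mathscr L_2 \subseteq \Der(\mathscr L)$ always holds (the $2$-envelope acts by inner derivations, and inner derivations of a Lie algebra over a field of characteristic $p$ extend to the $p$-envelope acting faithfully when the center is trivial, which is the case here as $\mathscr L$ is simple), the content is the reverse inclusion $\Der(\mathscr L) \subseteq \mathscr L_2$, i.e. every derivation is inner in the restricted sense. First I would recall that $\dim \mathscr L_2 = 19$, so it suffices to prove $\dim \Der(\mathscr L) \le 19$. To this end I would fix the standard basis $\{b_1,\dots,b_9,c_1,\dots,c_5,d\}$ and let $D$ be an arbitrary derivation, writing $D(b_i), D(c_j), D(d)$ as unknown linear combinations of the $15$ basis vectors, for a total of $225$ scalar unknowns.

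Next I would impose the Leibniz identity $D[x,y] = [Dx,y] + [x,Dy]$ on all pairs of standard basis vectors, using the multiplication table (\ref{eq-m}). Rather than handle all $105$ pairs blindly, I would first exploit a torus: the element $b_3 = h \otimes 1$ (or a suitable toral element of $\mathscr L_2$) acts diagonally via $\ad$ in an eigenbasis, and a derivation can be decomposed into weight components with respect to this action. The weight-zero analysis pins down most of the diagonal freedom, and the nonzero-weight components are then heavily constrained because each weight space is small. Concretely, I expect the commutation relations with $b_2, b_3, b_4$ (which generate a large part of the table and include the $\mathfrak{sl}_2$-triple $\{b_1,b_2,b_3\}$) to already force $D$ into a low-dimensional space; the remaining relations then cut it down to exactly the $19$-dimensional space spanned by the operators $\ad x$ for $x$ in the standard basis together with $\ad b_1^{[2]}, \ad b_4^{[2]}, \ad b_7^{[2]}, \ad c_3^{[2]}$.

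The main obstacle is purely the bookkeeping: this is a large, if routine, linear-algebra computation over $\GF(2)$-arithmetic (characteristic $2$, so all signs vanish, which simplifies matters), and the risk is an arithmetic slip rather than a conceptual gap. I would organize it by (i) using the weight decomposition to reduce to a manageable subsystem, (ii) solving that subsystem to express all $225$ unknowns in terms of at most $19$ free parameters, and (iii) exhibiting the $19$ explicit inner derivations $\ad b_i$, $\ad c_j$, $\ad d$, $\ad b_1^{[2]}$, $\ad b_4^{[2]}$, $\ad b_7^{[2]}$, $\ad c_3^{[2]}$ and checking they are linearly independent — the latter being immediate since $\mathscr L_2$ has trivial center, so $\ad$ is injective on it. Combined with $\Der(\mathscr L) \supseteq \ad \mathscr L_2$ and $\dim \Der(\mathscr L) \le 19 = \dim \mathscr L_2$, this yields $\Der(\mathscr L) = \mathscr L_2$. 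In practice one would delegate step (ii) to a computer algebra check, but the weight argument in step (i) makes the structure transparent enough to be verified by hand.
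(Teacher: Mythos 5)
Your proposal is essentially the paper's own argument: the authors dispose of this proposition with the words ``straightforward calculations,'' i.e.\ exactly the direct linear-algebra verification you describe (impose the Leibniz rule on the multiplication table (\ref{eq-m}), bound $\dim\operatorname{Der}(\mathscr L)$ by $19$, and match it with the $19$ independent adjoint operators coming from $\mathscr L_2$, which act faithfully since $\mathscr L$ is simple). Your weight-space bookkeeping via the toral element $b_3$ is a sensible way to organize that computation but does not change the route.
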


\begin{proof}
Straightforward calculations.
\end{proof}

Recall that an element $x$ of a Lie algebra $L$ is called a \emph{sandwich} if 
$(\ad x)^2 = 0$ and $[[L,x],[L,x]] = 0$. (It is well-known -- and easy to see --
that if the characteristic of the ground field is different from $2$, then the 
second condition follows from the first one, but in characteristic $2$ this is 
not true). The set of all sandwiches is multiplicatively closed, what implies 
that the \emph{sandwich subalgebra}, i.e., the subalgebra of $L$ generated by 
all sandwiches, is just the linear span of sandwiches.

It follows from the result first proved by Kostrikin and Zelmanov, that a Lie 
algebra generated by sandwiches is nilpotent. See \cite[\S 3.2]{vaughan-lee} for
details and further references.

More generally, we will call a derivation $D$ of a Lie algebra $L$ a 
\emph{sandwich derivation}, if $D^2 = 0$ and $[D(L),D(L)]$ = 0.

\begin{lemma}\label{prop-sand}
The set of elements $x \in \mathscr L_2$ such that
\begin{equation}\label{eq-lx}
[[\mathscr L,x],[\mathscr L,x]] = 0
\end{equation}
coincides with the linear span of elements $c_2$, $c_4$, $c_5$, $c_3^{[2]}$.
\end{lemma}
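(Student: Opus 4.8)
The plan is to reduce the condition (\ref{eq-lx}) to a finite system of polynomial (in fact, quadratic) equations in the coordinates $\lambda_i, \mu_j, \eta, \xi_k$ of the general element (\ref{eq-genel}), and solve it by hand. First I would fix $x \in \mathscr L_2$ as in (\ref{eq-genel}) and compute the subspace $[\mathscr L, x]$, i.e.\ apply $\ad x$ to each of the fifteen standard basis vectors $b_1,\dots,b_9,c_1,\dots,c_5,d$ of $\mathscr L$, using the multiplication table (\ref{eq-m}) together with the mixed table giving the action of $b_1^{[2]}, b_4^{[2]}, b_7^{[2]}, c_3^{[2]}$ on $\mathscr L$. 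This produces fifteen vectors in $\mathscr L$ whose entries are linear in the unknowns; condition (\ref{eq-lx}) says that all $\binom{15}{2}$ pairwise brackets of these vectors vanish, which after expansion is a system of homogeneous quadratic equations in the unknowns.

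The key simplification I would exploit is that one does not need the full quadratic system: it suffices to work with a spanning set of $[\mathscr L,x]$ and, moreover, to separate the "easy" linear constraints from the genuinely quadratic ones. Concretely, I would first impose that $[\mathscr L,x]$ be small enough that its self-bracket can vanish at all — for a generic $x$ the space $[\mathscr L,x]$ is large and its derived algebra is visibly nonzero, so a number of coordinates are forced to be $0$ already at the level of "the coefficient of such-and-such basis vector in $[b_i,x]$ must drop out." This should quickly kill $\lambda_1,\dots,\lambda_9$, $\mu_1$, $\eta$, $\xi_1$, $\xi_4$, $\xi_7$, leaving only $\mu_2, \mu_4, \mu_5, \xi_3$ as possibly nonzero, i.e.\ exactly the span of $c_2, c_4, c_5, c_3^{[2]}$. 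Then I would verify the converse inclusion directly: for $x$ in that $4$-dimensional span, compute $[\mathscr L, x]$ explicitly (reading off the relevant columns of the two tables — the $c_2$, $c_4$, $c_5$ columns and the $c_3^{[2]}$ column) and check that the resulting subspace is abelian. Since $c_4$ is central-ish ($[\mathscr L, c_4]$ is spanned by $b_2, b_3, b_5$ from the table) and the images under $\ad c_2$, $\ad c_5$, $\ad c_3^{[2]}$ land in a small list of $b_i$'s and $c_i$'s, this final check is a short finite computation.

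The main obstacle I expect is bookkeeping rather than conceptual difficulty: because we are in characteristic $2$ there is no antisymmetry sign to help, the $2$-envelope introduces four extra basis vectors whose brackets with $\mathscr L$ must be tracked via a second table, and one must be careful that $[\mathscr L, x]$ for $x \in \mathscr L_2$ still lies in $\mathscr L$ (it does, since $\mathscr L$ is an ideal in $\mathscr L_2$) so that condition (\ref{eq-lx}) is evaluated entirely inside $\mathscr L$ where the table (\ref{eq-m}) suffices. Organizing the elimination so that each unknown is killed by a single cleanly identified equation — rather than wading into the whole quadratic variety — is what makes the argument short. I would present the forward direction as "these coordinates must vanish because the coefficient of $b_k$ in $[b_j, x]$ and in $[b_i, x]$ forces a nonzero bracket unless $\lambda_i = 0$," and the reverse direction as an explicit display of the (small) image subspace together with the observation that all its pairwise brackets are zero by the table.
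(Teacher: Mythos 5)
Your overall plan coincides with the paper's proof: write $x$ in the form (\ref{eq-genel}), translate (\ref{eq-lx}) into quadratic relations among the coordinates, kill the unwanted coordinates one by one by extracting coefficients from suitably chosen double brackets $[[x,b_i],[x,b_j]]$, and verify the converse inclusion directly. However, two points need repair. First, your list of coordinates to be eliminated omits $\mu_3$, the coefficient of $c_3$: killing only $\lambda_1,\dots,\lambda_9,\mu_1,\eta,\xi_1,\xi_4,\xi_7$ leaves the span of $c_2,c_3,c_4,c_5,c_3^{[2]}$, which is strictly larger than the asserted answer, and $c_3$ does \emph{not} satisfy (\ref{eq-lx}) (take $[b_4,c_3]=b_2$, $[b_1,c_3]=c_1$, and $[b_2,c_1]=c_3\neq 0$); so one more relation is needed (the paper gets $\mu_3^2=0$ from the coefficient of $c_2$ in $[[x,b_1],[x,b_6]]=0$). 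Second, there are no ``easy linear constraints'' to peel off, and (\ref{eq-lx}) does not force $[\mathscr L,x]$ to be small in any linear sense -- already for $x=c_2$ the image is the five-dimensional space $\langle b_8,c_2,c_3,c_4,c_5\rangle$. Every relation coming from (\ref{eq-lx}) is quadratic in the coordinates; the coordinate-by-coordinate elimination works because, in characteristic $2$, the extracted coefficient equations are perfect squares (or become so after earlier eliminations), e.g. $\xi_1^2=0$, $\lambda_7^2=0$, which over a perfect field kill single coordinates. Your closing paragraph describes this product-of-coefficients mechanism correctly, but the ``linear'' shortcut announced earlier would not materialize.

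In the converse direction, beware of verifying abelianness of the wrong space: the sum $[\mathscr L,c_2]+[\mathscr L,c_4]+[\mathscr L,c_5]+[\mathscr L,c_3^{[2]}]$ of the four columns is \emph{not} abelian, since it contains both $b_2\in[\mathscr L,c_4]$ and $c_3\in[\mathscr L,c_2]$, and $[b_2,c_3]=c_2\neq 0$. The check must be made for a general element $x=\mu_2c_2+\mu_4c_4+\mu_5c_5+\xi_3c_3^{[2]}$: then $b_2$ and $c_3$ occur only inside the single image vector $[b_1,x]=\mu_4b_2+\mu_5b_5+\xi_3b_8+\mu_2c_3$, while all the other image vectors lie in the abelian subspace $\langle b_5,b_8,c_2,c_4,c_5\rangle$, which commutes with $b_2$ and $c_3$, so all required brackets vanish. (Also a factual slip: $[\mathscr L,c_4]=\langle b_2,b_5,b_8,c_2\rangle$, not $\langle b_2,b_3,b_5\rangle$; the vector $b_3$ arises only from $[b_1^{[2]},c_4]$, and $b_1^{[2]}\notin\mathscr L$.) With these corrections your argument is exactly the one in the paper.
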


\begin{proof}
It is a matter of straightforward verification that any linear span of elements
$c_2$, $c_4$, $c_5$, $c_3^{[2]}$ satisfies the condition (\ref{eq-lx}).

Conversely, let $x$ be an arbitrary element (\ref{eq-genel}) of $\mathscr L_2$ 
satisfying this condition. We perform the following calculations:
\begin{itemize}

\item
Collecting in the equality $[[x,b_8],[x,c_5]] = 0$ the terms containing $c_1$ 
and $c_2$, we get, respectively, $\xi_1^2 = 0$ and $\lambda_1^2 + \xi_1\xi_7 = 0$,
whence $\xi_1 = 0$ and $\lambda_1 = 0$. 

\item
Collecting in the equality $[[x,b_6],[x,c_4]] = 0$ the terms containing $c_5$, 
we get $\lambda_7^2 = 0$, whence $\lambda_7 = 0$.

\item
Collecting in the equality $[[x,b_1],[x,c_4]] = 0$ the terms containing $b_8$, 
we get $\mu_1^2 = 0$, whence $\mu_1 = 0$. 

\item
Collecting in the equality $[[x,b_6],[x,c_2]] = 0$ the terms containing $b_8$, 
we get $\eta^2 = 0$, whence $\eta = 0$.

\item
Collecting in the equality $[[x,b_8],[x,c_3]] = 0$ the terms containing $b_5$ 
and $c_2$, we get, respectively, $\xi_4^2 = 0$ and 
$\lambda_4^2 + \lambda_3\xi_4 = 0$, whence $\xi_4 = 0$ and $\lambda_4 = 0$.

\item
Collecting in the equality $[[x,b_1],[x,b_8]] = 0$ the terms containing $b_9$ 
and $c_4$, we get, respectively, $\lambda_3^2 = 0$ and $\lambda_6^2 = 0$, whence
$\lambda_3 = 0$ and $\lambda_6 = 0$.

\item
Collecting in the equality $[[x,b_4],[x,c_2]] = 0$ the terms containing $c_4$, 
we get $\xi_7^2 = 0$, whence $\xi_7 = 0$.

\item
Collecting in the equality $[[x,b_1],[x,b_6]] = 0$ the terms containing $b_5$ 
and $c_2$, we get, respectively, $\lambda_2^2 = 0$ and $\mu_3^2 = 0$, whence 
$\lambda_2 = 0$ and $\mu_3 = 0$.

\item
Collecting in the equality $[[x,b_1],[x,c_3]] = 0$ the terms containing $c_2$, 
we get $\lambda_9^2 = 0$, whence $\lambda_9 = 0$.

\item
Collecting in the equality $[[x,b_1],[x,b_4]] = 0$ the terms containing $c_5$, 
we get $\lambda_8^2 = 0$, whence $\lambda_8 = 0$.

\item
Collecting in the equality $[[x,b_1],[x,b_7]] = 0$ the terms containing $c_4$, 
we get $\lambda_5^2 = 0$, whence $\lambda_5 = 0$.

\end{itemize}

We are left with a linear combination of $c_2$, $c_4$, $c_5$, $c_3^{[2]}$.
\end{proof}

As an immediate corollary of this lemma, we have

\begin{proposition}\label{prop-sandr}\hfill
\begin{enumerate}[\upshape(i)]
\item\label{it-sk}
The sandwich subalgebra of $\mathscr L$ is $3$-dimensional abelian, linearly
spanned by elements $c_2$, $c_4$, $c_5$.

\item\label{it-der}
The set of sandwich derivations of $\mathscr L$ forms a $4$-dimensional abelian
subalgebra of $\mathscr L_2$ spanned by the inner derivations $\ad c_2$, 
$\ad c_4$, $\ad c_5$, and by the outer derivation $\ad c_3^{[2]}$.
\end{enumerate}
\end{proposition}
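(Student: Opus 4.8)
The plan is to derive both parts directly from Lemma \ref{prop-sand}, Proposition \ref{prop-der}, and the multiplication and $2$-map data already displayed; no fresh heavy computation should be required.

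For part \eqref{it-sk}: a sandwich of $\mathscr L$ is, by definition, an element $x \in \mathscr L$ with $(\ad x)^2 = 0$ and $[[\mathscr L,x],[\mathscr L,x]] = 0$. The second condition, via Lemma \ref{prop-sand} applied inside $\mathscr L_2$ and then restricted to $\mathscr L$ (i.e.\ putting $\xi_3 = 0$ in \eqref{eq-genel}), forces $x \in \langle c_2, c_4, c_5 \rangle$. Conversely, every such $x$ is a genuine sandwich: from \eqref{eq-m} the elements $c_2, c_4, c_5$ commute pairwise, and from the $2$-map one reads $c_2^{[2]} = c_4^{[2]} = c_5^{[2]} = 0$; since in a restricted Lie algebra of characteristic $2$ one has $(\ad y)^2 = \ad(y^{[2]})$, and $(\lambda c_2 + \mu c_4 + \nu c_5)^{[2]} = \lambda^2 c_2^{[2]} + \mu^2 c_4^{[2]} + \nu^2 c_5^{[2]}$ (the remaining terms being the vanishing brackets $[c_i,c_j]$), we get $(\ad x)^2 = 0$. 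Hence the set of sandwiches is exactly $\langle c_2, c_4, c_5 \rangle$; as recalled in the text, the sandwich subalgebra is the linear span of the sandwiches, so it equals this $3$-dimensional abelian space.

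For part \eqref{it-der}: since $\mathscr L$ is simple, the map $y \mapsto \ad y$ embeds $\mathscr L_2$ into $\mathrm{Der}(\mathscr L)$, and by Proposition \ref{prop-der} this is an isomorphism; in particular $C_{\mathscr L_2}(\mathscr L) = 0$, so $\ad y = 0$ implies $y = 0$ in $\mathscr L_2$. Thus a sandwich derivation of $\mathscr L$ is precisely $\ad y$ for some $y \in \mathscr L_2$ with $[[\mathscr L,y],[\mathscr L,y]] = 0$ and $(\ad y)^2 = \ad(y^{[2]}) = 0$, i.e.\ $y^{[2]} = 0$. By Lemma \ref{prop-sand} the first condition gives $y \in \langle c_2, c_4, c_5, c_3^{[2]} \rangle$, and it remains to verify that every $y$ in this four-dimensional space satisfies $y^{[2]} = 0$: the ``diagonal'' contributions vanish because $c_2^{[2]} = c_4^{[2]} = c_5^{[2]} = 0$ and $(c_3^{[2]})^{[2]} = c_3^{[4]} = 0$, and every ``cross'' term $[c_i,c_j]$ with $c_i,c_j$ among these four elements vanishes by the multiplication tables. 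Hence $\langle c_2, c_4, c_5, c_3^{[2]} \rangle$ is an abelian subalgebra of $\mathscr L_2$, each of its elements is a sandwich derivation, and these exhaust the sandwich derivations; finally $\ad c_3^{[2]}$ is outer since $c_3^{[2]} \in \mathscr L_2 \setminus \mathscr L$.

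The only delicate point is bookkeeping rather than mathematics: one must check that the defining condition $[D(\mathscr L),D(\mathscr L)] = 0$ of a sandwich derivation is literally hypothesis \eqref{eq-lx} of Lemma \ref{prop-sand} for $x = y$ (using $D(\mathscr L) = [\mathscr L,y]$), and that passing from $(\ad y)^2 = 0$ to $y^{[2]} = 0$ is legitimate, which rests on $C_{\mathscr L_2}(\mathscr L) = 0$ and hence on Proposition \ref{prop-der}. Everything else is immediate inspection of the tables above.
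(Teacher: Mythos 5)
Your proposal is correct and follows essentially the same route as the paper: Lemma \ref{prop-sand} gives the containment of sandwiches (resp.\ sandwich derivations, via Proposition \ref{prop-der}) in $\langle c_2,c_4,c_5\rangle$ (resp.\ $\langle c_2,c_4,c_5,c_3^{[2]}\rangle$), and the converse follows from the pairwise vanishing brackets together with $c_2^{[2]}=c_4^{[2]}=c_5^{[2]}=0$ (and $c_3^{[4]}=0$), so that $(\ad x)^2=\ad(x^{[2]})=0$ on the whole span. You merely spell out more explicitly the faithfulness of $\ad$ on $\mathscr L_2$ and the outerness of $\ad c_3^{[2]}$, which the paper leaves implicit.
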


\begin{proof}
(i) By Lemma \ref{prop-sand}, the sandwich subalgebra of $\mathscr L$ lies in
the linear span of $c_2$, $c_4$, $c_5$. Since these elements pairwise commute,
they span the $3$-dimensional abelian subalgebra of $\mathscr L$, and 
$$
(\mu_1 c_2 + \mu_4 c_4 + \mu_5 c_5)^{[2]} = 
\mu_1^2 c_2^{[2]} + \mu_4^2 c_4^{[2]} + \mu_5^2 c_5^{[2]} = 0 ,
$$
thus any element in this $3$-dimensional subalgebra satisfies the condition
$(\ad x)^2 = 0$.

(ii) By Proposition \ref{prop-der}, any derivation is an element of 
$\mathscr L_2$. Apply Lemma \ref{prop-sand} and reason as above.
\end{proof}

Note that from Lemma \ref{prop-sand} and Proposition \ref{prop-sandr} it follows
that in the Skryabin algebra $\mathscr L$, the condition 
$[[\mathscr L,x],[\mathscr L,x]] = 0$ implies $[[\mathscr L,x],x] = 0$. In 
general, this is, of course, not true.

\section{Computations over $\GF(2)$, the absolute toral rank, 
thin decomposition}\label{sec-subalg}

\subsection{Some numerology}\label{ss-num}

In this section we report on computations over $\GF(2)$, performed on the 
computer in GAP, \cite{gap}\footnote{
The GAP code is available at 
\texttt{https://web.osu.cz/$\sim$Zusmanovich/papers/15dim/} .
}.
A brute-force search on the computer shows that in the $2$-envelope of the 
Skryabin algebra $\mathscr L$ over $\GF(2)$ there are 384 toral elements, $6144$
$2$-dimensional tori, 21504 $3$-dimensional tori, 26880 $4$-dimensional tori, 
and no $5$-dimensional tori (no attempt was made to determine their conjugacy 
classes with respect to the automorphism group). 

The centralizer in $\mathscr L$ of each of the $384$ toral elements is 
$7$-dimensional, and for $240$ toral elements the centralizer is (central) 
simple. Among these $240$ simple algebras, $48$ have absolute toral rank $2$,
and $192$ have absolute toral rank $3$. As proved in \cite[Theorem 1]{vl-comp}
(and confirmed by computations in \cite{eick}), over $\GF(2)$ there exist two 
simple $7$-dimensional Lie algebras. These algebras are identified as (forms) of
the Zassenhaus algebra $W_1^{\prime}(3)$, and the Hamiltonian algebra 
$H_2^{\prime\prime}\big((2,1),(1 + x_1^{(3)}x_2)\, dx_1 \wedge dx_2\big)$, 
denoted by us here simply as $W$ and $H$, respectively (see \cite{gg-2010} and 
\cite{gga} for further info, including explicit multiplication tables of these 
algebras, and their identification with some other simple $7$-dimensional Lie 
algebras from the literature). Both algebras have absolute toral rank $3$ over 
an algebraically closed field, but over $\GF(2)$ $W$ has absolute toral rank 
$2$, while $H$ has absolute toral rank $3$; thus the absolute toral rank can be
used to distinguish them as subalgebras of the Skryabin algebra in our 
computations.

\subsection{Some examples}

Let us exhibit explicitly one of the maximal tori, and one of the 
$7$-di\-men\-si\-o\-nal simple subalgebras mentioned in the previous subsection.

Here is just one of the $4$-dimensional tori, linearly spanned by the toral 
elements
\begin{equation}\label{eq-tor}
b_1 + b_3 + b_1^{[2]} , \quad
b_2 + b_3 + c_4 + b_7^{[2]} , \quad 
b_4 + b_6 + b_4^{[2]} + b_7^{[2]} , \quad
b_8 + b_9 + c_1 + c_3 + b_7^{[2]} + c_3^{[2]} .
\end{equation}

Now take the first toral element in this torus, $h = b_1 + b_3 + b_1^{[2]}$. Its
centralizer $C_{\mathscr L}(h)$ has the basis
$$
b_1 + b_3       ,\quad 
b_2 + c_3 + c_4 ,\quad 
b_4 + b_6       ,\quad
b_5 + b_6 + c_5 ,\quad
b_7 + b_9       ,\quad 
c_1 + c_3       ,\quad 
d .
$$

It is straightforward to check that in this basis the multiplication table of
$C_{\mathscr L}(h)$ coincides with the multiplication table of the simple
$7$-dimensional Hamiltonian algebra $H$ (see \cite[\S 3]{gg-2010} or 
\cite[\S 1]{gga}) via the identification
\begin{gather*}
V_0 \leftrightarrow b_5 + b_6 + c_5 ,\quad
V_1 \leftrightarrow c_1 + c_3       ,\quad 
E_1 \leftrightarrow b_7 + b_9       ,\quad
E_0 \leftrightarrow b_2 + c_3 + c_4 ,\quad
\\
F_1 \leftrightarrow b_4 + b_6       ,\quad
F_0 \leftrightarrow d               ,\quad
G   \leftrightarrow b_1 + b_3 .
\end{gather*}

\subsection{The absolute toral rank}

The computer calculation in \S \ref{ss-num} shows that the absolute toral rank 
of the Skryabin algebra over $\GF(2)$ is equal to $4$. However, a bit of 
additional work allows to establish this result over an arbitrary field.

\begin{theorem}
The absolute toral rank of the Skryabin algebra is equal to $4$.
\end{theorem}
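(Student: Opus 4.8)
The plan is to combine the computer calculation over $\GF(2)$ with a base-change argument. Recall that the absolute toral rank of $\mathscr L$ is, by definition, the maximal dimension of a torus in $\mathscr L_2 \otimes_K \overline{K}$, equivalently the maximal dimension of a torus in $(\mathscr L \otimes_K \overline K)_2$ (the $2$-envelope commutes with separable, hence with arbitrary perfect, base extensions). Since $\mathscr L$ is defined by structure constants in $\GF(2)$ — this is visible from the multiplication table \eqref{eq-m} with $\beta=\delta=0$, from the description of $\mathscr L_2$ in \S\ref{sec-misc}, and from the $2$-map — it suffices to compute this rank for the $\GF(2)$-form and then argue that passing to the algebraic closure does not increase it.

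First I would record the lower bound: \S\ref{ss-num} exhibits (over $\GF(2)$, hence over any extension) four toral elements of $\mathscr L_2$ spanning a $4$-dimensional torus, namely the elements in \eqref{eq-tor}; this is a straightforward check that the four elements are $[2]$-semisimple and pairwise commute, so they span a torus, giving absolute toral rank $\ge 4$. Second, for the upper bound I would argue that the absolute toral rank is governed by the structure of a maximal torus $T$ and its root space decomposition; since $\mathscr L$ is simple, $\dim\mathscr L=15$, and $\mathscr L_2$ has dimension $19$ over $\GF(2)$, a torus $T$ of dimension $n$ in $(\mathscr L\otimes\overline K)_2$ acts diagonalizably, so $\mathscr L\otimes\overline K = \bigoplus_{\alpha\in T^*}\mathscr L_\alpha$ with $\mathscr L_0 \supseteq C_{\mathscr L}(T)\supseteq T\cap\mathscr L$. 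The key numerical constraint is that the nonzero roots, together with the zero weight space, must accommodate a $15$-dimensional space while $T$ itself (inside the $4$-dimensional complement $\mathscr L_2/\mathscr L$ at most, together with whatever part lies in $\mathscr L$) is bounded. Concretely: a torus of dimension $\ge 5$ in $(\mathscr L\otimes\overline K)_2$ would have to intersect $\mathscr L\otimes\overline K$ in dimension $\ge 5-4=1$, so $\mathscr L$ would contain a nonzero $[2]$-semisimple element $t$ whose centralizer in $\mathscr L$ contains a torus of dimension $\ge 4$; but over $\overline K$ every nonzero toral element of $\mathscr L_2$ lying in $\mathscr L$ can be conjugated (or its centralizer analysed directly) to reduce to the $\GF(2)$ computation, where the centralizer of every toral element is $7$-dimensional and, by the classification of simple $7$-dimensional Lie algebras recalled in \S\ref{ss-num}, has absolute toral rank at most $3$. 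This forces $n\le 4$.

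The main obstacle is making the reduction "over $\overline K$ we may assume the toral element lies in the $\GF(2)$-subalgebra, or at least has the same centralizer data" precise, since a priori $\mathscr L\otimes\overline K$ has many more toral elements than $\mathscr L$ itself, and the automorphism group over $\overline K$ (computed in \S\ref{sec-aut}) need not act transitively on conjugacy classes of tori. I expect the cleanest route is \emph{not} to chase conjugacy but to bound $\dim T$ directly: decompose $\mathscr L\otimes\overline K$ under a hypothetical $5$-dimensional torus $T$, note $\dim\mathscr L_0 \ge \dim T - 4 + (\text{rank-one part in }\mathscr L)$ and that each nonzero root space is $\le$ some small bound coming from the representation theory of $\mathscr L_0$, then derive a contradiction with $\dim\mathscr L=15$ — essentially a pigeonhole on $15 = \dim\mathscr L_0 + \sum_{\alpha\ne 0}\dim\mathscr L_\alpha$ once one knows $\mathscr L_0$ is large. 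If that elementary counting is too weak, the fallback is the semisimple-quotient description: $C_{\mathscr L\otimes\overline K}(T)$ is a Cartan subalgebra of toral rank at least $\dim T - 1$ in a simple algebra of dimension $15$, and one invokes the structural results of \cite{skryabin} and \cite{GZ} (which classify the possible socles $S\otimes\mathcal O$) to see that toral rank $\ge 5$ is incompatible with $\dim = 15$. Either way the $\GF(2)$ computation supplies both the sharp lower bound and a reality check on the final answer $4$.
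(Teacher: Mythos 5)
Your lower bound is fine and coincides with the paper's: the torus \eqref{eq-tor} is defined over $\GF(2)$ and stays a $4$-dimensional torus after any base change. The problem is the upper bound, and the gap there is real. The paper's proof rests on a specific tool you never invoke: by \cite[Theorem 2(ii)]{premet-cartan}, a torus in a restricted Lie algebra which coincides with its own normalizer (i.e.\ is simultaneously a Cartan subalgebra) is automatically a torus of \emph{maximal} dimension. Hence the whole theorem reduces to one linear-algebra check with the structure constants: the explicit torus \eqref{eq-tor} equals its normalizer in the $2$-envelope of $\overline{\mathscr L}=\mathscr L\otimes_{\GF(2)}K$, and this check, being the solution of a linear system defined over $\GF(2)$, is insensitive to the field extension. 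No bound on hypothetical $5$-dimensional tori, and no control of toral elements over $\overline K$, is ever needed.

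Your substitutes for this step do not close. The reduction ``every toral element of $\overline{\mathscr L}$ can be conjugated into, or analysed via, the $384$ toral elements over $\GF(2)$'' is exactly the point you flag as the main obstacle, and it is not available: base change produces toral elements (and tori) with coefficients outside $\GF(2)$, the computer data says nothing about them, and no transitivity of $\Aut(\overline{\mathscr L})$ on tori is established anywhere (nor is it used in the paper). The pigeonhole fallback is not an argument: for a $5$-dimensional torus there are $31$ possible nonzero roots, so the identity $15=\dim\mathscr L_0+\sum_{\alpha\ne 0}\dim\mathscr L_\alpha$ yields no contradiction without substantial extra input, which you do not supply. Finally, the structural results of \cite{skryabin} and \cite{GZ} concern Cartan subalgebras of toral rank one and absolute toral rank two, respectively; they give no upper bound on the toral rank of a $15$-dimensional simple algebra. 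So as written the proposal establishes only the inequality $\ge 4$; to finish it you need either Premet's theorem applied to a self-normalizing torus, as in the paper, or some genuinely new upper-bound argument.
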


\begin{proof}
As we want to establish this result over an arbitrary field $K$, we will 
distinguish the Skryabin algebra 
$\overline{\mathscr L} = \mathscr L \otimes_{\GF(2)} K$ over $K$, and its 
$\GF(2)$-form $\mathscr L$.

A direct computer verification shows that each of the 26880 $4$-dimensional tori
$T$ in $\mathscr L_2$ coincides with its normalizer, i.e., is a Cartan 
subalgebra of $\mathscr L_2$. Consequently, $\overline T = T \otimes_{\GF(2)} K$
is a Cartan subalgebra of $\overline{\mathscr L_2} = (\overline{\mathscr L})_2$.
By \cite[Theorem 2(ii)]{premet-cartan}], $\overline T$ is a torus of the maximal
possible dimension in $\overline{\mathscr L_2}$, and hence the absolute toral 
rank of $\overline{\mathscr L}$ is equal to $4$.

Alternatively, a direct simple proof free from reference to the computer can be
provided by just looking at one of the $4$-dimensional tori, for example, at 
(\ref{eq-tor}). Indeed, a direct calculation shows that (\ref{eq-tor}) coincides
with its normalizer in the $2$-envelope of any Skryabin algebra 
$\overline{\mathscr L}$, and hence by the same reference to \cite{premet-cartan}
is a torus of the maximal possible dimension.
\end{proof}

\subsection{Thin decomposition}\label{ss-thin}

Let us consider the following situation. Assume that a Lie algebra $L$ has
dimension $2^n-1$, the absolute toral rank of $L$ is $n$, $T$ is a torus in the
$2$-envelope of $L$ of (the maximal) dimension $n$, and $T \cap L = 0$. Assume 
further that the roots of the action of $T$ on $L$ are exactly nonzero tuples in
$\GF(2)^n$ (in particular, the centralizer of $T$ in $L$ is zero), and each root space is one-dimensional. Thus, the root space 
decomposition is of the form
\begin{equation}\label{eq-thin}
L = \bigoplus_{\substack{\alpha \in \GF(2)^n \\ \alpha \ne (0,\dots,0)}} 
K e_\alpha
\end{equation}
for some elements $e_\alpha \in L$. In particular, the multiplication table of 
$L$ in the basis $\set{e_{\alpha}}{\alpha \in \GF(2)^n \backslash (0,\dots,0)}$
has the following form: either $[e_\alpha,e_\beta] = e_{\alpha + \beta}$, or 
$[e_\alpha,e_\beta] = 0$.

In such a situation, we will call the decomposition (\ref{eq-thin}) \emph{thin}.
Some simple Lie algebras (for example, the $7$-dimensional algebras, as shown 
in \cite{gg-2010}) admit a thin decomposition. We suggest that this is an 
important property of simple Lie algebras which should be taken into account in
the classification efforts.

A direct computer 
verification shows that the Skryabin algebra admits a thin decomposition with
respect to \emph{each} of the 26880 $4$-dimensional tori in its $2$-envelope. We
will provide explicitly one of them, corresponding to the torus (\ref{eq-tor}).

The corresponding generators of the one-dimensional root spaces are:
\begin{align*}
&e_{0001} = b_2 + b_3 + b_4 + b_6 + c_4                       \\
&e_{0010} = b_2 + b_3 + c_1 + c_3 + c_4                       \\
&e_{0011} = b_2 + b_3 + b_4 + b_6 + c_1 + c_3 + c_4           \\
&e_{0100} = b_1 + b_3 + b_7 + b_9 + d                         \\
&e_{0101} = b_7 + b_9 + d                                     \\
&e_{0110} = b_1 + b_3 + b_5 + b_6 + c_5 + d                   \\
&e_{0111} = b_5 + b_6 + c_5                                   \\
&e_{1000} = b_2 + b_3 + b_8 + b_9 + c_1 + c_2 + c_3           \\
&e_{1001} = b_2 + b_3 + b_4 + b_5 + b_6                       \\
&e_{1010} = b_2 + b_3 + c_1 + c_2 + c_3                       \\
&e_{1011} = b_2 + b_3 + b_4 + b_5 + b_6 + c_1 + c_2 + c_3     \\
&e_{1100} = b_1 + b_2 + b_3 + b_7 + b_8 + b_9 + c_2 + c_3 + d \\
&e_{1101} = b_5 + b_6 + b_7 + b_8 + b_9 + c_2 + c_3 + d       \\
&e_{1110} = b_1 + b_2 + b_3 + b_5 + b_6 + c_2 + c_3 + d       \\
&e_{1111} = b_5 + b_6
\end{align*}
and the corresponding multiplication table of $\mathscr L$ reads:

{\small
\begin{equation*}
\begin{array}{|c||c|c|c|c|c|c|c|c|c|c|c|c|c|c|} \hline
& e_{0010} & e_{0011} & e_{0100} & e_{0101} & e_{0110} & e_{0111} & e_{1000} & 
e_{1001} & e_{1010} & e_{1011} & e_{1100} & e_{1101} & e_{1110} & e_{1111} 
\\ \hline\hline
e_{0001} & e_{0011} & e_{0010} & e_{0101} & e_{0100} & 0 & 0 & e_{1001} & 0 &
e_{1011} & e_{1010} & e_{1101} & e_{1100} &e_{1111} & 0
\\ \hline
e_{0010} && e_{0001} & e_{0110} & e_{0111} & e_{0100} & 0 & 0 & e_{1011} & 0 & 
e_{1001} & e_{1110} & e_{1111} & e_{1100} & 0            
\\ \hline
e_{0011} &&& e_{0111} & e_{0110} & e_{0101} & 0 & e_{1011} & e_{1010} & e_{1001}
& 0 & 0 & e_{1110} & e_{1101} & 0
\\ \hline
e_{0100} &&&& 0 & e_{0010} & e_{0011} & e_{1100} & e_{1101} & e_{1110} & 0 & 
e_{1000} & e_{1001} & 0 & e_{1011} 
\\ \hline
e_{0101} &&&&& e_{0011} & e_{0010} & 0 & e_{1100} & e_{1111} & e_{1110} & 0 & 
e_{1000} & e_{1011} & e_{1010}
\\ \hline
e_{0110} &&&&&& e_{0001} & e_{1110} & e_{1111} & e_{1100} & e_{1101} & 0 & 0 & 
e_{1000} & e_{1001}
\\ \hline
e_{0111} &&&&&&& 0 & 0 & 0 & 0 & e_{1011} & e_{1010} & e_{1001} & 0
\\ \hline
e_{1000} &&&&&&&& e_{0001} & 0 & e_{0011} & e_{0100} & 0 & e_{0110} & 0
\\ \hline
e_{1001} &&&&&&&&& e_{0011} & e_{0010} & e_{0101} & e_{0100} & 0 & 0
\\ \hline
e_{1010} &&&&&&&&&& e_{0001} & e_{0110} & e_{0111} & e_{0100} & 0
\\ \hline
e_{1011} &&&&&&&&&&& e_{0111} & e_{0110} & e_{0101} & 0
\\ \hline
e_{1100} &&&&&&&&&&&& e_{0001} & e_{0010} & e_{0011}
\\ \hline
e_{1101} &&&&&&&&&&&&& 0 & e_{0010}
\\ \hline
e_{1110} &&&&&&&&&&&&&& e_{0001}
\\ \hline
\end{array}
\end{equation*}
}

\section{The automorphism group}\label{sec-aut}

The goal of this section is to determine the automorphism group of the Skryabin
algebra. First we define three types of automorphisms -- exponential 
automorphisms, a certain explicitly defined three one-parameter families, and 
diagonal automorphisms, determine the group generated by them, and then prove 
that they exhaust the whole automorphism group.

\subsection{Exponential automorphisms}

If $D$ is a sandwich derivation of a Lie algebra $L$, then $\exp(D) = 1 + D$ is
an automorphism of $L$, called an \emph{exponential automorphism}. If 
$D = \ad x$ is an inner derivation (i.e., $x$ is a sandwich in $L$), we will use
the shorthand notation $\exp(x)$ instead of $\exp(\ad x)$.

Since $\exp(D)^2 = \exp(2D) = 1$, exponential automorphisms are of order 
$2$, and generate a unipotent subgroup of exponent $2$, denoted by $\Exp(L)$, of
the automorphism group $\Aut(L)$. Note also that if $D_1,D_2 \in L$ are two 
commuting sandwich derivations, then the corresponding automorphisms also 
commute: $\exp(D_1) \circ \exp(D_2) = \exp(D_2) \circ \exp(D_1)$.

According to Proposition \ref{prop-sandr}(\ref{it-der}), the group 
$\Exp(\mathscr L)$ is $4$-dimensional abelian, isomorphic to the additive group
$K^4 = K \oplus K \oplus K \oplus K$. Let us write down its one-parameter 
generators explicitly (here and below we indicate only those basis elements, on
which the automorphism acts non-identically):
$$
\exp (\alpha c_2): 
\begin{array}{l}
b_1 \mapsto b_1 + \alpha c_3  \\ 
b_3 \mapsto b_3 + \alpha c_2  \\
b_4 \mapsto b_4 + \alpha c_4  \\ 
b_7 \mapsto b_7 + \alpha c_5  \\ 
d\>\>\> \mapsto d + \alpha b_8 
\end{array}
$$
$$
\exp (\alpha c_4): 
\begin{array}{l}
b_1 \mapsto b_1 + \alpha b_2  \\ 
b_4 \mapsto b_4 + \alpha b_5  \\ 
b_7 \mapsto b_7 + \alpha b_8  \\ 
c_1 \mapsto c_1 + \alpha c_2  \\
\end{array}
$$
$$
\exp (\alpha c_5): 
\begin{array}{l}
b_1 \mapsto b_1 + \alpha b_5   \\
b_7 \mapsto b_7 + \alpha c_2   \\
d\>\>\> \mapsto d + \alpha c_4
\end{array}
$$
$$
\exp (\alpha c_3^{[2]}): 
\begin{array}{l}
b_1 \mapsto b_1 + \alpha b_8   \\
b_4 \mapsto b_4 + \alpha c_2   \\
d\>\>\> \mapsto d + \alpha c_5 .
\end{array}
$$
Here $\alpha \in K$ is a parameter.

\subsection{On $\exp(\alpha c_4)$}

For a moment, let us return to the realization of $\mathscr L$ as a filtered 
deformation of the semisimple Lie algebra (\ref{eq-ss}). The exponential 
automorphisms $\exp(\alpha c_4)$ are the only automorphisms of $\mathscr L$ 
which are ``lifted'' from automorphisms of the algebra (\ref{eq-ss}). Using the
results about automorphisms of the tensor product of a simple Lie algebra and a
divided power algebra (see, for example, \cite[\S 2.2]{weisf}), it is not 
difficult to describe the automorphism group of (\ref{eq-ss}) (roughly, those 
are automorphisms of $\ess \otimes \mathcal O_1(2)$ invariant under the action 
of $g$ and of $\partial$). Automorphisms which are preserved by the cocycles 
defining the deformation, will be also automorphisms of $\mathscr L$. Among the 
automorphisms of (\ref{eq-ss}), the only automorphisms satisfying this condition, are
automorphisms acting on $\ess \otimes \mathcal O_1(2)$ as 
$\phi(\alpha) \otimes \id_{\mathcal O_1(2)}$, where $\phi(\alpha)$ is an 
automorphism of $\ess$ of the form
$$
e \mapsto e + \alpha f, \quad f \mapsto f, \quad h \mapsto h ,
$$
and leaving $g \otimes \langle 1,x \rangle$ and $\partial$ invariant. In terms
of the standard basis of $\mathscr L$, this is exactly $\exp(\alpha c_4)$. In 
general, the automorphism group of the algebra (\ref{eq-ss}) is much smaller 
than the automorphism group of its deformation $\mathscr L$, which shows that, 
generally, there is no strong relationship between automorphisms of a Lie 
algebra and of its deformation.

\subsection{Automorphisms $\Phi$, $\Psi$, and $\Theta$}

Consider the following three one-parameter families of linear maps on 
$\mathscr L$, depending on the parameter $\alpha \in K$:
\begin{equation*}
\Phi(\alpha):
\begin{array}{l}
b_1 \mapsto b_1 + \alpha b_4                                \\
b_2 \mapsto b_2 + \alpha b_5                                \\
b_7 \mapsto b_7 + \alpha^2 b_2 + \alpha^3 b_5 + \alpha c_1  \\
b_8 \mapsto b_8 + \alpha c_2                                \\
b_9 \mapsto b_9 + \alpha^2 c_4                              \\
c_1 \mapsto c_1 + \alpha^2 b_5                              \\
c_3 \mapsto c_3 + \alpha c_4                                \\
d\>\>\> \mapsto d + \alpha b_3 + \alpha^2 b_6 
\end{array}
\end{equation*}
$$
\Psi(\alpha):
\begin{array}{l}
b_1 \mapsto	b_1 + \alpha b_6 + \alpha c_1 + \alpha^2 c_4 \\
b_2 \mapsto b_2 + \alpha c_2                             \\
b_3 \mapsto b_3 + \alpha b_5                             \\
b_4 \mapsto b_4 + \alpha b_2 + \alpha^2 c_2              \\
b_6	\mapsto b_6 + \alpha c_4                             \\
b_7 \mapsto b_7 + \alpha b_5 + \alpha c_3                \\
b_9 \mapsto b_9 + \alpha c_2 + \alpha c_5                \\
c_1 \mapsto c_1 + \alpha b_8 + \alpha c_4                \\
d\>\>\> \mapsto	d + \alpha b_2 + \alpha b_9 + \alpha^2 c_2 + \alpha^2 c_5
\end{array}
$$

\smallskip

\begin{equation*}
\hspace{10pt}\Theta(\alpha):
\begin{array}{l}
b_1 \mapsto b_1 + \alpha b_7 + \alpha^3 b_8 + \alpha^2 b_9                 \\ 
b_2 \mapsto b_2 + \alpha b_8                                               \\
b_3 \mapsto b_3 + \alpha^2 b_8                                             \\
b_4 \mapsto b_4 + \alpha^2 b_5 + \alpha c_1 + \alpha^2 c_3 + \alpha^3c_5   \\
b_5 \mapsto b_5 + \alpha c_2                                               \\
b_6 \mapsto b_6 + \alpha^2 c_2 + \alpha^2 c_5                              \\
c_1 \mapsto c_1 + \alpha^2 c_2 + \alpha^2 c_5                              \\
c_3 \mapsto c_3 + \alpha c_5                                               \\
d\>\>\> \mapsto d + \alpha^2 b_5 + \alpha b_6 + \alpha^3 c_2 + \alpha^2 c_3 
\end{array}
\end{equation*}

\bigskip

Direct calculations show that all of them are automorphisms of $\mathscr L$, and
\begin{alignat}{3}
&\Phi(\alpha) \>\circ\> \Phi(\alpha^\prime) &\>=\>& \Phi(\alpha + \alpha^\prime)
\notag \\
&\Psi(\alpha) \>\circ\> \Psi(\alpha^\prime) &\>=\>& 
\Psi(\alpha + \alpha^\prime) \>\circ\>
\exp\big(\alpha\alpha^\prime c_3^{[2]}\big)
\label{eq-psi}
\\
&\Theta(\alpha) \>\circ\> \Theta(\alpha^\prime) &\>=\>&
\Theta(\alpha+\alpha^\prime) \>\circ\>
\exp\big((\alpha^2\alpha^\prime + \alpha{\alpha^\prime}^2) c_3^{[2]}\big)
\notag 
\end{alignat}
for any $\alpha, \alpha^\prime \in K$. In particular, the automorphisms
$\Phi(\alpha)$ and $\Theta(\alpha)$ are of order $2$, and the automorphisms 
$\Psi(\alpha)$ are of order $4$.

\subsection{Diagonal automorphisms}\label{ss-diag}

Let $L$ be a Lie algebra with a basis $B$. We will call an automorphism of $L$ \emph{diagonal with respect to $B$} (or just \emph{diagonal} if it is clear
which basis $B$ is meant), if it leaves invariant each one-dimensional subspace
$Kx$, $x \in B$.

\begin{lemma}\label{lemma-diag}
Each diagonal automorphism of $\mathscr L$ with respect to the standard basis 
is of the form
\begin{alignat}{7}\label{eq-diag}
b_1&\mapsto \lambda^{-2} b_1 \qquad &c_1 &\mapsto \lambda c_1   \notag \\
b_2&\mapsto \lambda^2 b_2    \qquad &c_2 &\mapsto \lambda^5 c_2 \notag \\
b_3&\mapsto b_3              \qquad &c_3 &\mapsto \lambda^3 c_3 \notag \\
b_4&\mapsto \lambda^{-1} b_4 \qquad &c_4 &\mapsto \lambda^4 c_4 \notag \\
\Delta(\lambda): \quad 
b_5&\mapsto \lambda^3 b_5    \qquad &c_5 &\mapsto \lambda^5 c_5        \\
b_6&\mapsto \lambda b_6      \qquad &d &\mapsto \lambda^{-1} d  \notag \\
b_7&\mapsto b_7              \notag \\
b_8&\mapsto \lambda^4 b_8    \notag \\
b_9&\mapsto \lambda^2 b_9    \notag
\end{alignat}
where $\lambda \in K^*$.
\end{lemma}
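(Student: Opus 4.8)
The plan is to set up a generic diagonal automorphism $\varphi$ by putting $\varphi(x_i) = \mu_i x_i$ for each standard basis vector $x_i$, where $\mu_i \in K^*$, and then to run through the multiplication table (\ref{eq-m}) with $\beta = \delta = 0$, extracting from each nonzero structure constant a multiplicative constraint of the form $\mu_i \mu_j = \mu_k$. So the first step is to record, from every entry $[x_i,x_j] = x_k$ in the table, the equation $\mu_i\mu_j = \mu_k$; this turns the problem into solving a system of monomial equations in the $15$ unknowns $\mu_i$ over the multiplicative group $K^*$.

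Next I would solve this system. A convenient way is to pass to the exponent lattice: write $\mu_i = \lambda^{n_i}$ (formally, view the unknowns as living in the free abelian group on the relations) and check that the system of additive relations $n_i + n_j = n_k$ coming from the table has solution space of rank exactly $1$, spanned by the vector of exponents displayed in (\ref{eq-diag}), namely $(n_{b_1},\dots,n_d) = (-2,2,0,-1,3,1,0,4,2,\,1,5,3,4,5,\,-1)$. Concretely, I would first use a few relations to express everything in terms of one parameter — for instance $[b_1,b_3]=b_1$ forces $\mu_3 = 1$ (so $b_3,b_7$ get exponent $0$, the latter from $[b_3,b_7]=b_7$ combined with $\mu_3=1$), then set $\lambda := \mu_{b_6}$ and propagate: $[b_1,b_6]=b_4$ gives $\mu_{b_1}\mu_{b_6}=\mu_{b_4}$, $[b_1,d]=0$ is vacuous, $[b_6,d]=b_3$ gives $\mu_{b_6}\mu_d = \mu_{b_3}=1$ hence $\mu_d = \lambda^{-1}$, $[b_1,d]$... — and so forth until all $\mu_i$ are expressed as powers of $\lambda$. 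Then I would verify that the exponents so obtained are mutually consistent with every remaining relation in the table (this is the bulk of the bookkeeping but is entirely routine), and that they match (\ref{eq-diag}). Finally, conversely, one checks that for each $\lambda \in K^*$ the assignment (\ref{eq-diag}) does respect the whole table, so $\Delta(\lambda)$ is indeed an automorphism; this is the same set of identities read in the other direction.

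The main obstacle is purely organizational rather than conceptual: the table (\ref{eq-m}) has many entries, and one must be careful to (a) use enough relations to pin down all $15$ exponents, and (b) confirm that no relation is violated — a single overlooked entry such as one of the $c_i$–$c_j$ or $b_i$–$c_j$ products could in principle over-determine the system and collapse it. In particular one should double-check the relations involving $c_2$ and $c_5$, which both receive exponent $5$, and the relation $[c_5,d]=c_4$, which must reconcile $\mu_{c_5}\mu_d = \lambda^5\lambda^{-1} = \lambda^4 = \mu_{c_4}$. Once every such entry has been checked, the rank of the solution space is exactly $1$, giving precisely the one-parameter family claimed. I expect no use of the hypothesis on $K$ beyond it being a field (perfectness and square roots are not needed here, since all exponents are integers).
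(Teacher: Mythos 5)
Your overall strategy---reading off the monomial system $\mu_i\mu_j=\mu_k$ from the table (\ref{eq-m}) with $\beta=\delta=0$ and solving it---is essentially the paper's, but there is a genuine gap at the one decisive step, and it is exactly the step you dispatch incorrectly. The relation $[b_3,b_7]=b_7$ gives $\mu_{b_3}\mu_{b_7}=\mu_{b_7}$, i.e.\ it only re-proves $\mu_{b_3}=1$ and says nothing about $\mu_{b_7}$; so your claim that ``$b_7$ gets exponent $0$'' is unfounded as written. This is not a cosmetic slip: the bracket relations alone do \emph{not} force $\mu_{b_7}=1$, only $\mu_{b_7}^2=1$. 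Indeed, from $[b_4,d]=b_1$, $[b_1,c_1]=d$, $[c_1,d]=b_7$, $[b_7,d]=b_4$ one gets $\mu_{b_7}=\mu_{b_1}\mu_{b_4}^{-2}$ and $\mu_{b_1}^2=\mu_{b_4}^{4}$, hence $\mu_{b_7}^2=1$ and nothing stronger. Equivalently, the universal group of the grading by the standard basis is $\mathbb Z\oplus\mathbb Z/2\mathbb Z$ (the paper records this in the section on gradings), with the class of $b_7$ the $2$-torsion element; the diagonal maps compatible with the table form $\mathrm{Hom}(\mathbb Z\oplus\mathbb Z/2\mathbb Z,K^*)$, so your passage to the exponent lattice via ``$\mu_i=\lambda^{n_i}$'' presupposes what is to be proved: a rank-one integer solution lattice does not by itself exclude torsion characters.

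Consequently your closing remark that nothing about $K$ beyond its being a field is used is also wrong: one needs $t^2=1\Rightarrow t=1$ in $K^*$, i.e.\ precisely characteristic $2$ (over a field with $-1\neq 1$ a monomial system with this universal group would have extra sign solutions not of the form (\ref{eq-diag})). The paper handles this point by normalizing $\alpha(b_4)=\lambda^{-1}$, deriving $\alpha(b_7)=\alpha(b_1)\lambda^2$, and then obtaining $\alpha(b_7)^2=1$ from the $2$-envelope identity $b_7^{[4]}=b_7^{[2]}$, after which characteristic $2$ yields $\alpha(b_7)=1$ and hence $\alpha(b_1)=\lambda^{-2}$. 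Your argument can be repaired purely inside $\mathscr L$ by deriving $\mu_{b_7}^2=1$ from the chain of bracket relations above and then invoking uniqueness of square roots of $1$ in characteristic $2$; with that step supplied, the remaining propagation through the table and the converse check that every $\Delta(\lambda)$ respects (\ref{eq-m}) are indeed routine.
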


\begin{proof}
Let $x \mapsto \alpha(x)x$, where $x$ is an element in the standard basis, be a 
diagonal automorphism of $\mathscr L$. Denote $\alpha(b_4) = \lambda^{-1}$.

We perform the following calculations:
\begin{itemize}
\item 
$\alpha(b_3) = \alpha(b_1)\alpha(b_2)$, 
$\alpha(b_1) = \alpha(b_1)\alpha(b_3)$, and
$\alpha(b_2) = \alpha(b_2)\alpha(b_3)$
imply $\alpha(b_2) = \alpha(b_1)^{-1}$ and \linebreak $\alpha(b_3) = 1$.

\item
$\alpha(b_1) = \alpha(b_4)\alpha(d)$ and $\alpha(d) = \alpha(b_1)\alpha(c_1)$
imply $\alpha(c_1) = \lambda$.

\item
$\alpha(b_7) = \alpha(c_1)\alpha(d) = \alpha(b_1)\lambda^2$.

\item
$b_7^{[4]} = b_7^{[2]}$ implies $\alpha(b_7)^4 = \alpha(b_7)^2$, thus 
$\alpha(b_7)^2 = 1$ and $\alpha(b_7) = 1$, $\alpha(b_1) = \lambda^{-2}$, 
$\alpha(b_2) = \lambda^2$, and $\alpha(d) = \lambda^{-1}$.

\item
$\alpha(c_3) = \alpha(b_1)\alpha(c_2) = \alpha(c_2)\lambda^{-2}$. 

\item
$\alpha(c_1) = \alpha(b_1)\alpha(c_3)$ implies 
$\lambda = \alpha(c_2)\lambda^{-4}$, thus $\alpha(c_2) = \lambda^5$, and 
$\alpha(c_3) = \lambda^3$.

\item
$\alpha(b_2) = \alpha(b_1)\alpha(c_4)$ implies $\alpha(c_4) = \lambda^4$.
 
\item
$\alpha(b_9) = \alpha(b_2)\alpha(b_7) = \lambda^2$.

\item
$\alpha(b_8) = \alpha(b_2)\alpha(b_9) = \lambda^4$.

\item
$\alpha(c_2) = \alpha(b_7)\alpha(c_5)$ implies $\alpha(c_5) = \lambda^5$.

\item
$\alpha(b_5) = \alpha(b_4)\alpha(c_4) = \lambda^3$.

\item
$\alpha(b_6) = \alpha(b_1)\alpha(b_5) = \lambda$.
\end{itemize}

\end{proof}

Conversely, it is straightforward to verify that each map of the form 
(\ref{eq-diag}) is an automorphism of $\mathscr L$. Therefore, the group of 
diagonal automorphisms $\Diag(\mathscr L)$ is isomorphic to $K^*$, the 
multiplicative group of $K$.

\subsection{Putting all this together}

Denote by $\Aut_0(\mathscr L)$ the group generated by the just defined 
automorphisms of $\mathscr L$.

Direct calculations show that for each $\alpha, \gamma \in K$, any of 
$\Phi(\alpha)$, $\Psi(\alpha)$, and $\Theta(\alpha)$ commutes with any of 
$\exp(\gamma c_2)$, $\exp(\gamma c_4)$, $\exp(\gamma c_5)$, and 
$\exp(\gamma c_3^{[2]})$. Additionally,
\begin{alignat*}{4}
&\Psi(\gamma) &\>\circ\>& \Phi(\alpha) = &\Phi(\alpha) &\>\circ\> \Psi(\gamma) 
\>\circ\> \exp\big(\alpha\gamma c_4\big)
\\
&\Theta(\gamma) &\>\circ\>& \Phi(\alpha) = 
&\Phi(\alpha) &\>\circ\> \Theta(\gamma)
\>\circ\> 
\exp\big(\alpha\gamma^2 c_2\big) \>\circ\> 
\exp\big(\alpha^2\gamma c_4\big) \>\circ\>
\exp\big(\alpha\gamma^2 c_5\big) \>\circ\>
\exp\big(\alpha^2\gamma^2 c_3^{[2]}\big)
\\
&\Theta(\gamma) &\>\circ\>& \Psi(\alpha) = 
&\Psi(\alpha) &\>\circ\> \Theta(\gamma)
\>\circ\> 
\exp\big(\alpha\gamma c_2\big) \>\circ\> 
\exp\big(\alpha\gamma c_5\big) .  
\end{alignat*}

Together with (\ref{eq-psi}) this imply that the group $\mathcal N$ generated by
the exponential automorphisms, and automorphisms $\Phi$, $\Psi$ and $\Theta$, is
a $7$-dimensional unipotent algebraic group. Further, taking into account that 
$\Delta(\lambda)^{-1} = \Delta(\lambda^{-1})$, we have:
\begin{alignat}{5}\label{eq-act}
&\exp(\alpha c_2)^{\Delta(\lambda)} &\>=\>& \exp(\lambda^{-5}\alpha c_2)
\notag \\
&\exp(\alpha c_4)^{\Delta(\lambda)} &\>=\>& \exp(\lambda^{-4}\alpha c_4)
\notag \\
&\exp(\alpha c_5)^{\Delta(\lambda)} &\>=\>& \exp(\lambda^{-5}\alpha c_5) 
\notag \\
&\exp(\alpha c_3^{[2]})^{\Delta(\lambda)} &\>=\>& 
\exp(\lambda^{-6}\alpha c_3^{[2]})
\\
&\Phi(\alpha)^{\Delta(\lambda)} &\>=\>& \Phi(\lambda^{-1}\alpha)
\notag \\
&\Psi(\alpha)^{\Delta(\lambda)} &\>=\>& \Psi(\lambda^{-3}\alpha)
\notag \\
&\Theta(\alpha)^{\Delta(\lambda)} &\>=\>& \Theta(\lambda^{-2}\alpha) .
\notag
\end{alignat}

Therefore, $\mathcal N$ is a normal subgroup in $\Aut_0(\mathscr L)$, and 
$\Aut_0(\mathscr L)$ is isomorphic to the semidirect product 
$K^* \ltimes \mathcal N$, with the action of $K^*$ on $\mathcal N$ defined by 
(\ref{eq-act}).

\subsection{Invariant subspaces}

Now we are going to prove that the automorphisms constructed in the previous 
sections exhaust all automorphisms of $\mathscr L$. To this aim, we determine
certain invariant subspaces in the Skryabin algebra.

\begin{proposition}\label{flag}
The Skryabin algebra $\mathscr L$ possesses the following 
$\Aut(\mathscr L)$-invariant subspaces:
{\setlength{\jot}{4pt}
\begin{gather*}
\langle c_2 \rangle \quad 
\langle c_4 \rangle \quad 
\langle c_5 \rangle 
\\
\langle b_5,c_2 \rangle \quad \langle b_8,c_2 \rangle
\\
V_4 = \langle b_2,b_5,b_8,c_2 \rangle  
\\
V_5 = \langle b_8,c_2,c_3,c_4,c_5 \rangle
\\
V_6 = \langle b_2,b_3,b_5,b_8,c_2,c_4 \rangle
\quad
V_6^\prime = \langle b_5,b_6,b_8,c_2,c_4,c_5 \rangle
\quad
V_6^{\prime\prime} = \langle b_5,b_8,b_9,c_2,c_4,c_5 \rangle
\\
V_7 = \langle b_5,b_6,b_8,b_9,c_2,c_4,c_5 \rangle
\quad
V_7^\prime = \langle b_5,b_8,b_9,c_2,c_3,c_4,c_5 \rangle
\\
V_8 = \langle b_2,b_3,b_5,b_6,b_8,c_2,c_4,c_5 \rangle
\quad
V_8^\prime = \langle b_2,b_5,b_6,b_8,b_9,c_2,c_4,c_5 \rangle 
\quad
V_8^{\prime\prime} = \langle b_2,b_5,b_6,b_8,c_2,c_3,c_4,c_5 \rangle
\\
V_9 = \langle b_2,b_3,b_5,b_6,b_8,b_9,c_2,c_4,c_5 \rangle
\quad
V_9^\prime = \langle b_2,b_5,b_6,b_8,b_9,c_2,c_3,c_4,c_5 \rangle 
\\
V_{11} = \langle b_2,b_3,b_4,b_5,b_6,b_8,c_1,\dots,c_5 \rangle
\quad
V_{11}^\prime = \langle b_2,b_3,b_5,b_6,b_8,b_9,c_1,\dots,c_5 \rangle
\\
V_{11}^{\prime\prime} = 
\langle b_2,b_3,b_5,b_6,b_8,b_9,c_2,c_3,c_4,c_5,d \rangle
\\
V_{12} = \langle b_2,b_3,b_4,b_5,b_6,b_8,b_9,c_1,\dots,c_5 \rangle 
\end{gather*}
}
\end{proposition}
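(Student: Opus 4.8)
The plan is to establish the $\Aut(\mathscr{L})$-invariance of the listed subspaces by building them up from small, canonically characterized pieces, using the structural invariants already extracted in Sections 2 and 3. The starting point is Proposition \ref{prop-sandr}(\ref{it-sk}): the sandwich subalgebra $\langle c_2, c_4, c_5 \rangle$ is $\Aut(\mathscr{L})$-invariant, being defined purely by the multiplicative conditions $(\ad x)^2 = 0$ and $[[L,x],[L,x]] = 0$. Inside this $3$-dimensional invariant subspace, $\langle c_4 \rangle$ should be singled out as, say, $\{x \mid [[\mathscr{L},x],\mathscr{L}] = 0\}$ or via the image of some invariant bracket operation, and then $\langle c_2 \rangle$, $\langle c_5 \rangle$ distinguished from each other by a further invariant (for instance, $\langle c_5 \rangle$ meets $[\mathscr{L},\mathscr{L}]$-type iterated products differently from $\langle c_2 \rangle$; one inspects the multiplication table to find the asymmetry). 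Once the three lines are known to be invariant, every subsequent subspace in the list that contains one or more of them gains that much structure for free.

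The engine for the rest is the following observation: if $W$ is an $\Aut(\mathscr{L})$-invariant subspace, then so is its centralizer $C_{\mathscr{L}}(W)$, its normalizer, the subspace $[W,\mathscr{L}]$, the subspace $\{x \in \mathscr{L} \mid [x,\mathscr{L}] \subseteq W\}$, and more generally any subspace obtained by applying bracket, centralizer, and sum/intersection operations to previously constructed invariant subspaces. So I would proceed by a bootstrapping induction on dimension: $\langle b_5, c_2 \rangle$ and $\langle b_8, c_2 \rangle$ as centralizers (within $\mathscr{L}$, or within some already-built invariant subspace) of the line $\langle c_2 \rangle$ or of $\langle c_4 \rangle$ intersected with something; then $V_4$, $V_5$ as sums or as $[\,\cdot\,,\mathscr{L}]$ of these; then the $V_6$'s as centralizers of the $V_4, V_5$ family or as $\{x \mid [x,\mathscr{L}]\subseteq V_4\}$; and so on up to $V_{12}$, each $V_k$ (and its primed variants) realized by an explicit chain of such operations. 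The primed and double-primed variants of a given dimension will typically be distinguished by which of the previously-built invariants they centralize or are built from — the point of listing several is precisely that they are the finitely many subspaces produced by closing the initial data under these operations.

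Concretely, the verification reduces to: (1) check each claimed subspace really is closed under the relevant operation (a finite computation with the multiplication table (\ref{eq-m}) and the extended table for $\mathscr{L}_2$, since some definitions may need $c_3^{[2]}$); and (2) organize the $V_k$'s into a sequence so that each is defined only in terms of its predecessors and the three invariant lines. Because $\Aut(\mathscr{L})$ also acts on $\mathscr{L}_2$ (by Proposition \ref{prop-der}, $\Der \mathscr{L} = \mathscr{L}_2$, and automorphisms of $\mathscr{L}$ extend to $\mathscr{L}_2$), one is free to use $\mathscr{L}_2$-operations, e.g. $\ad c_3^{[2]}$, in the constructions — this is what makes subspaces like $V_5 = \langle b_8, c_2, c_3, c_4, c_5\rangle$ accessible, since $c_3$ itself is \emph{not} a sandwich and needs $b_7^{[2]}$ or $c_3^{[2]}$-related operations to be reached.

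The main obstacle I anticipate is not any single hard step but the combinatorial bookkeeping: finding, for each of the roughly twenty listed subspaces, the \emph{right} invariant-theoretic definition (which centralizer of which predecessor, or which preimage $\{x : [x,\mathscr{L}] \subseteq W\}$), and doing so in a consistent order with no circular dependence. In particular, the very first separation — distinguishing $\langle c_2\rangle$, $\langle c_4\rangle$, $\langle c_5\rangle$ from one another inside the $3$-dimensional sandwich algebra — is delicate, because any invariant used must be phrased in $\Aut(\mathscr{L})$-equivariant terms and must actually succeed in telling these three lines apart; a diagonal automorphism $\Delta(\lambda)$ scales $c_2 \mapsto \lambda^5 c_2$, $c_4 \mapsto \lambda^4 c_4$, $c_5 \mapsto \lambda^5 c_5$, so $c_2$ and $c_5$ have the same weight and genuinely require a non-weight invariant to separate. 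I expect this to come from the iterated bracket structure ($c_5$ is in the image of $\ad d$ and brackets back to $c_4$, whereas $c_2$ arises differently), but pinning it down cleanly is the part that needs the most care; everything above dimension $3$ is then a disciplined but routine climb.
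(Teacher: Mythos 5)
Your overall strategy is exactly the paper's: start from the $\Aut(\mathscr L)$-invariant sandwich subalgebra $S=\langle c_2,c_4,c_5\rangle$ and bootstrap, realizing each listed subspace by equivariant operations (brackets, centralizers, intersections, preimages, and $2$-envelope data) applied to previously constructed invariant subspaces. But as written this is a programme rather than a proof: the entire content of the paper's argument is the explicit invariant characterization of each of the roughly twenty subspaces, and you supply none of them, deferring precisely the steps you yourself identify as the crux. Worse, your one concrete attempt fails: $\{\,x\in\mathscr L \mid [[\mathscr L,x],\mathscr L]=0\,\}$ is zero, since $\mathscr L$ is simple and hence centerless ($[[\mathscr L,x],\mathscr L]=0$ forces $[\mathscr L,x]$ into the center, so $[\mathscr L,x]=0$, so $x=0$), so it cannot single out $\langle c_4\rangle$. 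Also, the needed recipes are not all of the plain ``centralizer/bracket closure'' type you describe: the paper must resort to dimension thresholds, rank conditions, and $2$-map conditions, so declaring everything above dimension $3$ ``a routine climb'' understates what has to be exhibited and then actually verified against the multiplication table.

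For the delicate step you flagged (separating the three lines inside $S$, where $c_2$ and $c_5$ share the diagonal weight $\lambda^5$), the paper's resolution is: $\langle c_5\rangle=\{\,x\in S \mid \dim[\mathscr L,x]\le 3\,\}$; $\langle c_4,c_5\rangle$ is the span of those $x\in S$ with $\dim[\mathscr L,x]\le 4$; then $\langle c_4\rangle=[\mathscr L,c_5]\cap\langle c_4,c_5\rangle$, $V_4=[\mathscr L,c_4]$, $\langle c_2\rangle=S\cap V_4$, $V_5=[\mathscr L,c_2]$, and the chain continues with further ad hoc invariants, e.g. $V_9^\prime=C_{\mathscr L}(S)$, $V_8^\prime=\{\,x\in V_9^\prime \mid x^{[2]}\in\mathscr L\,\}$, $V_7$ as the set of $x\in V_8^\prime$ for which the induced map $\ad x$ on $V_8^\prime/S$ has rank at most $1$, $V_{12}=\{\,x\in\mathscr L \mid [x,V_{11}^\prime]\subseteq V_{11}^\prime+Kx\,\}$, and $V_6$ as the span of squares of elements of $V_8$. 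Until you produce such an explicit list (or an equivalent one) and check it against the table, the invariance of the stated subspaces is not established; so the proposal has a genuine gap, even though its guiding idea coincides with the paper's.
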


\begin{proof}
By Proposition \ref{prop-sandr}(\ref{it-sk}), the sandwich subalgebra $S$ of 
$\mathscr L$ coincides with $\langle c_2,c_4,c_5 \rangle$. Starting from this, 
rewrite the specified subspaces in invariant terms:

\begin{itemize}

\item $\langle c_5 \rangle = \set{x\in S}{\dim [\mathscr L,x] \leq 3}$;

\item 
$\langle c_4,c_5 \rangle$ is the subspace (actually, the abelian subalgebra) 
linearly spanned by elements $x\in S$ such that $\dim [\mathscr L,x] \leq 4$;

\item $\langle c_4 \rangle = [\mathscr L,c_5] \cap \langle c_4,c_5 \rangle$;

\item $V_4 = [\mathscr L,c_4]$;

\item $\langle c_2 \rangle = S \cap V_4$;

\item $V_5 = [\mathscr L,c_2]$;

\item $\langle b_8,c_2 \rangle = V_4 \cap V_5$;

\item $V_7^\prime = [\mathscr L,\langle b_8,c_2 \rangle]$

\item $V_9^\prime = \set{x\in \mathscr L}{[x,S] = 0}$;

\item 
$\langle b_5,c_2 \rangle = 
[\mathscr L,c_5] \cap V_4 \cap [V_9^\prime,V_9^\prime]$;

\item $V_8^\prime = \set{x \in V_9^\prime}{x^{[2]} \in \mathscr L}$;

\item $V_8^{\prime\prime} = [\mathscr L, \langle b_5,c_2 \rangle]$;

\item 
Note that $V_8^\prime$ is a subalgebra of $\mathscr L$ with the center $S$. The
subspace $V_7$ (actually, a subalgebra) coincides with the set of elements 
$x \in V_8^\prime$ such that the induced adjoint map 
$\ad x: V_8^\prime/S \to V_8^\prime/S$ has rank $\le 1$;

\item $V_6^\prime = V_7 \cap V_8^{\prime\prime}$;

\item $V_6^{\prime\prime} = V_7 \cap V_7^\prime$;

\item $V_{11}^\prime = \set{x \in \mathscr L}{[x,S] \subseteq S}$;

\item $V_{11}^{\prime\prime} = C_{\mathscr L}(c_4)$;

\item $V_9 = [V_{11}^{\prime\prime},V_{11}^{\prime\prime}]$;

\item 
$V_{12} = 
\set{x \in \mathscr L}{[x,V_{11}^\prime] \subseteq V_{11}^\prime + Kx}$;

\item $V_{11} = [V_{12},V_{12}]$;

\item $V_8 = V_9 \cap V_{11}$;

\item 
$V_6$ is a linear span of elements $x^{[2]}$, where $x \in V_8$;

\end{itemize}

All this is verified by straightforward computations.
\end{proof}

Many more $\Aut(\mathscr L)$-invariant subspaces of $\mathscr L$ can be produced
in a similar fashion, we confine here ourselves only to those which will be 
needed in the sequel.

\subsection{No other automorphisms}

\begin{theorem}\label{l-b3}
Assume that any quadratic equation with coefficients in the ground field $K$ has
a solution in $K$. Then $\Aut(\mathscr L) = \Aut_0(\mathscr L)$.
\end{theorem}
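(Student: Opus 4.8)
The plan is to take an arbitrary automorphism $\varphi \in \Aut(\mathscr L)$ and show that, after composing with suitable elements of $\Aut_0(\mathscr L)$, it becomes the identity. The engine of the argument is the flag of $\Aut(\mathscr L)$-invariant subspaces from Proposition \ref{flag}: since $\varphi$ must preserve every one of the $V_i$ (and the various short subspaces $\langle c_2\rangle$, $\langle c_4\rangle$, $\langle c_5\rangle$, $\langle b_5,c_2\rangle$, $\langle b_8,c_2\rangle$), it acts on the associated graded pieces of this flag by scalars on the $1$-dimensional quotients and by a constrained matrix on the larger ones. First I would record that $\varphi$ fixes each of the lines $\langle c_2\rangle$, $\langle c_4\rangle$, $\langle c_5\rangle$ up to a scalar, and that $\varphi$ permutes the invariant subspaces according to their intrinsic (bracket-theoretic and $[2]$-theoretic) descriptions, so no nontrivial permutation of the labels is possible. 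This pins down $\varphi$ on a large ``nilpotent tail'' of $\mathscr L$ up to a small number of parameters.

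**Reducing to a diagonal-plus-unipotent form.** Next I would use the action of $\Diag(\mathscr L) = \{\Delta(\lambda)\}$ to normalize the scalar by which $\varphi$ acts on one distinguished line — say $\langle c_4 \rangle$, on which $\Delta(\lambda)$ acts as $\lambda^4$ — so that after replacing $\varphi$ by $\Delta(\lambda)\circ\varphi$ for an appropriate $\lambda$ (here the hypothesis that quadratic equations, hence in particular the relevant low-degree equations over $K$, are solvable is what lets us extract the needed root), we may assume $\varphi$ acts trivially on $\langle c_4\rangle$. The relations \eqref{eq-act} and the multiplicativity $\alpha(b_i\wedge b_j)$-type constraints coming from the multiplication table then force the remaining diagonal scalars to collapse: exactly as in the proof of Lemma \ref{lemma-diag}, the identities $[c_1,c_3^{[2]}$-type relations], $b_7^{[4]}=b_7^{[2]}$, and the bracket relations among $b_1,\dots,b_9,c_1,\dots,c_5,d$ propagate the single normalization through the whole basis. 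So after this step $\varphi$ fixes the whole sandwich subalgebra $\langle c_2,c_4,c_5\rangle$ pointwise and fixes $V_4 = [\mathscr L,c_4]$, etc.

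**Killing the unipotent part.** Now $\varphi$ is ``unipotent relative to the flag'': on each graded quotient it is the identity, so $\varphi = \id + N$ where $N$ shifts the flag strictly downward. The remaining job is to show that the free parameters in $N$ are exactly matched by the $7$-dimensional unipotent group $\mathcal N$ generated by $\exp(\gamma c_2)$, $\exp(\gamma c_4)$, $\exp(\gamma c_5)$, $\exp(\gamma c_3^{[2]})$, $\Phi(\alpha)$, $\Psi(\alpha)$, $\Theta(\alpha)$. Concretely I would peel off the top of the flag: look at $\varphi(b_4)$; since $\langle c_4\rangle$, $V_4$, $V_5$, $V_6$, etc.\ are invariant, $\varphi(b_4) = b_4 + (\text{lower terms})$, and composing with a suitable $\Psi(\alpha_1)$ removes the $b_2$-component, with $\Theta(\alpha_2)$ the $b_5$-component, and so on; then $\varphi(b_1)$ is corrected by $\Phi$, and the $\exp$'s mop up the $c_2,c_4,c_5,c_3^{[2]}$-directions. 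At each stage invariance of the appropriate $V_i$ guarantees that the correction needed lies in the span available to $\mathcal N$. After finitely many such compositions $\varphi$ fixes $b_1,\dots,b_9,d$ and the remaining $c_i$; since these generate $\mathscr L$, $\varphi = \id$, and unwinding the compositions gives $\varphi \in \Aut_0(\mathscr L)$.

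**Main obstacle.** The delicate point is the bookkeeping in the last step: one must choose the order in which the generators $\exp(\cdot), \Phi, \Psi, \Theta$ are applied so that each correction does not reintroduce components already eliminated, and one must verify that the number of independent ``lowering'' parameters of a flag-unipotent automorphism is exactly $7$ — no more. Showing there is no eighth parameter (i.e.\ that $\mathcal N$ is not merely contained in, but equals, the flag-unipotent radical of $\Aut(\mathscr L)$) is where the specific structure of $\mathscr L$ — in particular the interplay of the invariant subspaces $V_6, V_6', V_6'', V_7, V_7', V_8, \dots$ recorded in Proposition \ref{flag} — has to be used in full, and it is essentially a finite but somewhat intricate linear-algebra verification that an arbitrary flag-preserving $\varphi$ has its matrix entries constrained by the automorphism (Jacobi-type) conditions to lie on the $7$-dimensional subvariety swept out by $\mathcal N$.
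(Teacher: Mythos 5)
Your outline shares the paper's general philosophy (twist $\varphi$ by elements of $\Aut_0(\mathscr L)$, using the invariant subspaces of Proposition \ref{flag}, until it becomes the identity), but as it stands it has a genuine gap: the two claims that carry all the weight are asserted, not proved. First, it is not true ``exactly as in the proof of Lemma \ref{lemma-diag}'' that a flag-preserving automorphism, once normalized on $\langle c_4\rangle$, acts trivially on all graded pieces: Lemma \ref{lemma-diag} applies only to automorphisms that are diagonal with respect to the standard basis, and the listed invariant subspaces do not reduce an arbitrary $\varphi$ to that situation (for instance they do not separate $b_1$, $b_7$, $d$ modulo $V_{12}$, nor do they control the coefficient of $b_3$ inside $V_6$). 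In the paper these scalars are pinned down not by the flag but by the $2$-structure: torality $\varphi(b_3)^{[2]}=\varphi(b_3)$ forces the $b_3$-coefficient to be $1$, and later $b_1^{[4]}=0$ kills the $b_7$-, $c_1$-, $c_2$-components of $\varphi(b_1)$; your sketch never invokes the $[2]$-map at this stage. Second, the hardest step is eliminating the possible $b_2$-component of $\varphi(b_3)$; this cannot be done by twisting inside $\mathcal N$ (which is why the paper needs the separate centralizer argument comparing $C_{\mathscr L}(b_3)$ with $C_{\mathscr L}(b_2+b_3+c_4)$ and deriving the contradictory system (\ref{eq-1})--(\ref{eq-last})). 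Your proposal has no counterpart of this case analysis.

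Relatedly, you place the hypothesis on quadratic equations in the wrong spot: extracting the root needed to normalize the scalar on $\langle c_4\rangle$ only requires square (or fourth) roots, which exist because $K$ is perfect. The hypothesis is genuinely needed to solve the Artin--Schreier equation $\alpha^2+\alpha+\lambda_8=0$ when twisting by $\Theta(\alpha)$ in the case you do not treat. Finally, your last step --- that the ``flag-unipotent'' automorphisms are exactly the $7$-parameter group $\mathcal N$, with no eighth parameter --- is precisely the content of the theorem; labelling it the ``main obstacle'' and describing it as a finite linear-algebra verification does not discharge it. So the proposal is a plausible plan, but the decisive ingredients of the paper's proof (torality of $b_3$, the $[4]$-power relations, and the centralizer contradiction) are missing, and without them the argument does not go through.
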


\begin{proof}
Let $\varphi$ be an automorphism of $\mathscr L$. Our strategy is consecutively
``twist'' $\varphi$ by taking compositions with various automorphisms from 
$\Aut_0(\mathscr L)$, and eventually arrive to the conclusion 
$\varphi = \id_{\mathscr L}$.

By Proposition \ref{flag}, $\varphi(b_3)$ lies in $V_6$ and does not lie in 
$V_4 + \langle c_4 \rangle$, i.e., is of the form
$$
\varphi(b_3) = 
\lambda_2 b_2 + \lambda_3 b_3 + \lambda_5 b_5 + \lambda_8 b_8 + 
\mu_2 c_2 + \mu_4 c_4 ,
$$
where $\lambda_3 \ne 0$. As $b_3$ is toral, $\varphi(b_3)$ is toral, and the 
equality $\varphi(b_3) = \varphi(b_3)^{[2]}$ is equivalent to the following 
quadratic system:
\begin{align*}
&\lambda_2 = \lambda_2\lambda_3     \\
&\lambda_3 = \lambda_3^2            \\
&\lambda_5 = \lambda_3\lambda_5     \\
&\lambda_8 = \lambda_3\lambda_8     \\
&\mu_2 = \lambda_3\mu_2             \\
&\mu_4 = \lambda_2^2 .
\end{align*}

Consequently,
\begin{equation}\label{eq-x1}
\varphi(b_3) = \lambda_2 b_2 + b_3 + \lambda_5 b_5 + \lambda_8 b_8 + \mu_2 c_2 
+ \lambda_2^2 c_4 .
\end{equation}

Assume $\lambda_2 \ne 0$. Applying to both sides of the equality (\ref{eq-x1}) 
the automorphism
$$
\exp(\mu_2 c_2) \>\circ\> 
\Theta(\alpha) \>\circ\> 
\Phi(\lambda_5) \>\circ\> 
\Delta\Big(\frac{1}{\sqrt{\lambda_2}}\Big) ,
$$
where $\alpha$ satisfies the quadratic equation 
$\alpha^2 + \alpha + \lambda_8 = 0$ (this is the only place were we need the 
assumption that any quadratic equation with coefficients in $K$ has a solution),
we may assume
$$
\varphi(b_3) = b_2 + b_3 + c_4 .
$$

The automorphism $\varphi$ maps the subalgebra $C_{\mathscr L}(b_3)$ to the 
subalgebra $C_{\mathscr L}(b_2 + b_3 + c_4)$. We have
$$
C_{\mathscr L}(b_3) = \langle b_3, b_6, b_9, c_3, c_4, c_5, d \rangle
$$
and
$$
C_{\mathscr L}(b_2 + b_3 + c_4) = 
\langle b_2 + b_3 + c_4, b_5 + b_6, b_8 + b_9, c_2 + c_3, c_4, c_5, d \rangle .
$$

By Proposition \ref{flag}, $\varphi(b_6)$ lies in $V_6^\prime$, thus does not 
contain terms with $b_2 + b_3 + c_4$, $b_8 + b_9$, $c_2 + c_3$, and $d$. 
Similarly, $\varphi(b_9)$ lies in $V_6^{\prime\prime}$, thus does not contain terms with 
$b_2 + b_3 + c_4$, $b_5 + b_6$, $c_2 + c_3$, and $d$. Therefore, we can write
\begin{alignat*}{4}
&\varphi(b_6) &\>=\>& \alpha_6(b_5 + b_6) + \alpha_4 c_4 + \alpha_5 c_5  
\\
&\varphi(b_9) &\>=\>& \beta_9(b_8 + b_9) + \beta_4 c_4 + \beta_5 c_5     
\\
&\varphi(d) &\>=\>& \gamma_3(b_2 + b_3 + c_4) + \gamma_6(b_5 + b_6) 
+ \gamma_9(b_8 + b_9) + \delta_3(c_2 + c_3) + \delta_4 c_4 + \delta_5 c_5 
+ \gamma d 
\end{alignat*}
for certain $\alpha_i, \beta_i, \gamma_i, \delta_i, \gamma \in K$. Then the 
equalities $[\varphi(b_6),\varphi(d)] = \varphi(b_3)$ and 
$[\varphi(b_9),\varphi(d)] = \varphi(b_6)$ are equivalent to
\begin{gather}
\alpha_6\gamma = 1                    \label{eq-1} \\
\alpha_6\delta_3 + \alpha_5\gamma = 1 \label{eq-2}
\end{gather}
and
\begin{gather}
\beta_9\gamma = \alpha_6   \label{eq-bga}   \\
\beta_5\gamma = \alpha_4      \\
\beta_9\delta_3 = \alpha_5 \label{eq-last}
\end{gather}
respectively. It is easy to see that the system (\ref{eq-1})--(\ref{eq-last}) is
contradictory: for example, multiplying (\ref{eq-last}) by $\alpha_6$, and 
taking into account (\ref{eq-2}), we get 
$\beta_9 + \alpha_5\beta_9\gamma = \alpha_5\alpha_6$, what, in its turn, 
together with (\ref{eq-bga}) gives $\beta_9 = 0$, hence 
$\alpha_5 = \alpha_6 = 0$, what contradicts (\ref{eq-2}).

Therefore $\lambda_2 = 0$. Applying to both sides of (\ref{eq-x1}) the 
automorphism
$$
\Psi(\lambda_5) \>\circ\> 
\exp(\mu_2 c_2) \>\circ\> 
\Theta\big(\sqrt{\lambda_8}\big) ,
$$
we may assume $\varphi(b_3) = b_3$. Consequently, the eigenspace 
$\langle b_1, b_2, b_4, b_5, b_7, b_8, c_1, c_2 \rangle$ corresponding to the 
eigenvalue $1$ of $\ad b_3$, is invariant under $\varphi$, and we may write
$$
\varphi(b_1) = \lambda_1 b_1 + \lambda_2 b_2 + \lambda_4 b_4 + \lambda_5 b_5 + 
               \lambda_7 b_7 + \lambda_8 b_8 + \mu_1 c_1 + \mu_2 c_2 .
$$

Applying to both sides of this equality the automorphism
$$
\exp(\lambda_5 c_5)       \>\circ\> 
\exp(\lambda_8 c_3^{[2]}) \>\circ\> 
\exp(\lambda_2 c_4)       \>\circ\> 
\Phi(\lambda_4)            \>\circ\>
\Delta\big(\sqrt{\lambda_1}\big) ,
$$
we may assume $\lambda_1 = 1$ and 
$\lambda_2 = \lambda_4 = \lambda_5 = \lambda_8 = 0$.

Then we have
$$
0 = \varphi(b_1^{[4]}) = \varphi(b_1)^{[4]} = 
\mu_1^2 b_4^{[2]} + \lambda_7^4 b_7^{[2]} + \mu_2^2 c_3^{[2]} + 
\text{ (terms lying in $\mathscr L$)} ,
$$
what implies $\lambda_7 = \mu_1 = \mu_2 = 0$, and $\varphi(b_1) = b_1$. Further:
\begin{itemize}

\item $\varphi(c_4) \in \langle c_4 \rangle$ by Proposition \ref{flag};

\item 
$\varphi(b_2) = [\varphi(b_1),\varphi(c_4)] \in [b_1,\langle c_4 \rangle] = 
 \langle b_2 \rangle$;

\item 
$\varphi(c_2) \in \langle c_2 \rangle$ by Proposition \ref{flag};

\item 
$\varphi(c_3) = [\varphi(b_1),\varphi(c_2)] \in [b_1, \langle c_2 \rangle] =
 \langle c_3 \rangle$;

\item
$\varphi(c_1) = [\varphi(b_1),\varphi(c_3)] \in [b_1, \langle c_3 \rangle] =
 \langle c_1 \rangle$;

\item 
$\varphi(c_5) \in \langle c_5 \rangle$ by Proposition \ref{flag};

\item 
$\varphi(b_5) = [\varphi(b_1),\varphi(c_5)] \in [b_1, \langle c_5 \rangle] =
 \langle b_5 \rangle$;

\item
$\varphi(b_6) = [\varphi(b_1),\varphi(b_5)] \in [b_1, \langle b_5 \rangle] =
 \langle b_6 \rangle$;

\item 
$\varphi(d) = [\varphi(b_1),\varphi(c_1)] \in [b_1, \langle c_1 \rangle] =
 \langle d \rangle$;

\item
$\varphi(b_7) = [\varphi(c_1),\varphi(d)] \in 
[\langle c_1 \rangle, \langle d \rangle] = \langle b_7 \rangle$;

\item
$\varphi(b_4) = [\varphi(b_7),\varphi(d)] \in 
[\langle b_7 \rangle, \langle d \rangle] = \langle b_4 \rangle$;

\item
$\varphi(b_8) = [\varphi(c_2),\varphi(d)] \in 
[\langle c_2 \rangle, \langle d \rangle] = \langle b_8 \rangle$;

\item
$\varphi(b_9) = [\varphi(c_3),\varphi(d)] \in 
[\langle c_3 \rangle, \langle d \rangle] = \langle b_9 \rangle$.

\end{itemize}

Hence $\varphi$ is a diagonal automorphism, and by Lemma \ref{lemma-diag} we 
have $\varphi = \Delta(\lambda)$, where $\lambda^{-2} = 1$. But then 
$\lambda = 1$ and $\varphi = \id_{\mathscr L}$.
\end{proof}

\section{Gradings}

Having a supply of automorphisms of $\mathscr L$ at hand, and using the known 
correspondence between automorphisms and group gradings (formulated in full 
generality in the language of affine group schemes -- see, for example, 
\cite[Proposition 1.36]{elduque-kochetov}), we may try to construct group 
gradings of $\mathscr L$.

In practice, this is achieved by extending the ground field $K$ to a suitable 
commutative ring $R$ (not necessary a field -- we are dealing with the group 
scheme $R \to \Aut_R(\mathscr L \otimes_K R)$), extending an automorphism 
$\varphi$ of $\mathscr L$ to the automorphism $\overline\varphi$ of 
$\overline{\mathscr L} = \mathscr L \otimes_K R$ via 
$\overline\varphi(x \otimes r) = \varphi(x) \otimes r$, and considering 
eigenspaces 
$
\overline{\mathscr L}_\lambda = 
\set{x \in \overline{\mathscr L}}{\overline\varphi(x) = \lambda x} .
$
For a suitable choice of $\varphi$ and $R$, and a suitable homomorphism $\chi$ 
from the group generated by all eigenvalues $\lambda \in R$ to a group $G$, the
eigenspaces will be ``rational'': 
$\overline{\mathscr L}_\lambda = \mathscr L_{\chi(\lambda)} \otimes_K R$. The 
ensued grading 
$\mathscr L = \bigoplus_{\chi(\lambda)} \mathscr L_{\chi(\lambda)}$ will be a 
grading of $\mathscr L$ by $G$, even in the case when the eigenvalues $\lambda$
do not necessarily belong to the ground field.

For example, the diagonal automorphism $\Delta(\lambda)$ for the ``generic'' 
value of $\lambda$ (or, what is the same, for $\lambda \in K$ such that the 
order of $\lambda$ in the multiplicative group $K^*$ is $>7$) produces a 
$\mathbb Z$-grading
\begin{alignat}{10}\label{eq-z}
&\hspace{2pt}\scriptstyle{-2}& 
&\hspace{7pt}\scriptstyle{-1}& 
&\hspace{10pt}\scriptstyle{0}&
&\hspace{11pt}\scriptstyle{1}&
&\hspace{10pt}\scriptstyle{2}&
&\hspace{11pt}\scriptstyle{3}&
&\hspace{10pt}\scriptstyle{4}&
&\hspace{9pt}\scriptstyle{5}
\notag \\
\mathscr L =\>
\langle &b_1&     \rangle \oplus 
\langle &b_4,d&   \rangle \oplus 
\langle &b_3,b_7& \rangle \oplus 
\langle &b_6,c_1& \rangle \oplus 
\langle &b_2,b_9& \rangle \oplus 
\langle &b_5,c_3& \rangle \oplus 
\langle &b_8,c_4& \rangle \oplus 
\langle &c_2,c_5& \rangle .
\end{alignat}

Specializing the automorphism $\Delta(\lambda)$ to the cases $\lambda^n = 1$, 
$n \le 7$, we obtain a $\mathbb Z / n\mathbb Z$-grading of $\mathscr L$. 

As the product of any two elements from the standard basis is either zero, or
again an element from the standard basis, the decomposition of $\mathscr L$ into
one-dimensional subspaces spanned by the basis element is a grading. This is a 
group grading, and its universal group (\cite[\S 1.2]{elduque-kochetov}) is 
isomorphic to the (additive) abelian group with generators $x,y$ (corresponding
to elements $b_1$, $b_4$ respectively), and the relation $2x = 4y$, what is 
easily seen to be isomorphic to the direct sum 
$\mathbb Z \oplus \mathbb Z / 2\mathbb Z$. On the other hand, the thin 
decomposition exhibited in \S \ref{ss-thin} is a 
$(\mathbb Z / 2\mathbb Z)^4$-grading.

In general, it seems to be a difficult task to classify all group gradings of 
the Skryabin algebra. Even to determine whether the automorphisms of order $2$
(exponential, $\Phi$, and $\Theta$) lead to $\mathbb Z / 2\mathbb Z$-gradings 
seems to be far from trivial. Perhaps, it could be approached with the method
of \cite{krutov-leb}.

\section{Comparison of the Skryabin algebra with algebras from the Eick list. 
A bit more numerology}\label{ss-eick}

In \cite{eick}, a computer-generated list of simple Lie algebras over $\GF(2)$ 
of dimension $\le 20$ is presented. The Skryabin algebra (defined over $\GF(2)$)
is not isomorphic to any of the $15$-dimensional algebras in the list.

One way to see this, using data from \cite{eick}, is to look at automorphisms, 
either at the group of exponential automorphisms $\Exp(\mathscr L)$, or at the 
whole group $\Aut(\mathscr L)$. As the ground field is $\GF(2)$, 
$\Exp(\mathscr L)$ is isomorphic to the additive group 
$\GF(2) \oplus \GF(2) \oplus \GF(2) \oplus \GF(2)$, thus having order $16$, what
is different from all the $15$-dimensional algebras in the list (including the 
new ones, number 7 and 8, dubbed by us here as $Eick_7$ and $Eick_8$) except for
the non-alternating Hamiltonian algebra $P(2,1,1)$ (number 4). The order of 
$\Aut(\mathscr L)$ is $2^7 = 128$ (over $\GF(2)$, there are no nontrivial 
diagonal automorphisms), what is, again, different from all the 
$15$-dimensional algebras in the list.

Another way to distinguish between all these algebras, is to repeat for them the
same pedestrian, but informative computations concerning tori and the sandwich
subalgebra, as in \S \ref{ss-num} and \S \ref{sec-misc}. The following table 
accumulates some information about the $15$-dimensional central simple algebras
from the Eick list, whose derivation algebra is $19$-dimensional (and coincides
in all the cases with the $2$-envelope). Here $TR$ denotes the absolute toral
rank of the algebra, $N_1$ and $N_m$ denote, respectively, the number of toral 
elements, and of tori of the maximal dimension $TR$ in the $2$-envelope, and $S$
is the dimension of the sandwich subalgebra.

$$
\begin{tabular}{|l|r|r|r|r|}
\hline
\multicolumn{1}{|c|}{\small{algebra}} & \multicolumn{1}{|c|}{\small{$TR$}} & 
\multicolumn{1}{|c|}{\small{$N_1$}} & \multicolumn{1}{|c|}{\small{$N_m$}} & 
\small{$S$}
\\ \hline
$W(4)$     & 2 & 256 &  1,536 & 10
\\ \hline
$P(2,1,1)$ & 4 & 448 & 43,680 & 1
\\ \hline
$P(3,1)$   & 3 & 384 & 10,752 & 5
\\ \hline
$P(2,2)$   & 4 & 384 & 13,440 & 3
\\ \hline
$Eick_7$   & 4 & 464 & 87,360 & 1
\\ \hline
$Eick_8$   & 4 & 464 & 67,200 & 1
\\ \hline
\end{tabular}
$$

\medskip

Interestingly enough, all of these algebras which are of absolute toral rank 
$4$, i.e., $P(2,1,1)$, $P(2,2)$, $Eick_7$, and $Eick_8$, also admit thin 
decompositions with respect to a lot (conjecturally with respect to all) of the $4$-dimensional tori.

\section{Open questions}\label{s-quest}

{\bf 1.} Is it true that two toral elements $h$ and $h^\prime$ in $\mathscr L$ 
are conjugate with respect to the automorphism group, if and only if 
$C_{\mathscr L}(h) \simeq C_{\mathscr L}(h^\prime)$ ?

\medskip

{\bf 2.} Describe all gradings of the Skryabin algebra.

\medskip

{\bf 3.} \emph{Conjecture}. 
Any simple Lie algebra of dimension $>3$ over a field of characteristic $2$ 
admitting a thin decomposition, has:
\\
a) a proper simple graded subalgebra (with respect to this decomposition);
\\
b) a graded subalgebra isomorphic either to $W$ or to $H$.

\medskip

{\bf 4.} Classify simple finite-dimensional Lie algebras over an algebraically 
closed field of characteristic $2$, admitting a thin decomposition.

\medskip

{\bf 5.} 
Classify simple finite-dimensional $\mathbb Z$-graded Lie algebras over an 
algebraically closed field of characteristic $2$, such that all homogeneous 
components are of dimension $<3$. (Note that the Skryabin algebra belongs to this class, due to 
(\ref{eq-z})).

\section*{Acknowledgements}

Thanks are due to Bettina Eick, Dimitry Leites, and Alexander Premet for useful
remarks and interesting discussions. 
Alexander Grishkov was supported by FAPESP, grant number 2018/23690-6 and 
CNPq, grant number 307824/2016-0;
Marina Rasskazova during the work on this paper was visiting the University of
S\~ao Paulo, and was supported by FAPESP, grant number 2018/11292-6.

\end{document}